\documentclass{amsart}
\usepackage{parskip}
\usepackage{enumerate}
\usepackage{amsmath}
\usepackage{amssymb}
\usepackage{amsthm}
\usepackage{graphicx}
\usepackage{mathabx}
\usepackage{float}
\usepackage{tikz}
\usepackage{etex}
\DeclareGraphicsExtensions{.bmp,.png,.pdf,.jpg}
\usepackage{latexsym}
\usepackage{amscd}
\usepackage[all]{xy}
\usepackage{amsmath}
\usepackage[utf8]{inputenc}
\DeclareMathOperator{\diam}{diam}

\DeclareMathOperator{\Cov}{Cov}

\newtheorem{teor}{Theorem}[section]
\newtheorem{cor}[teor]{Corollary}
\newtheorem{lem}[teor]{Lemma}

\newtheorem{prop}[teor]{Proposition}
\newtheorem{defn}[teor]{Definition}

\def\R{\mathbb{R}}

\def\E{\mathbb{E}}
\def\Z{\mathbb{Z}}
\def\C{\mathbb{C}}

\def\N{\mathbb{N}}

\def\LL{\mathcal{L}}

\newcommand{\PP}{\mathcal{P}}
\newcommand{\QQ}{\mathcal{Q}}

\newcommand{\NN}{\mathcal{N}}
\renewcommand{\S}{\mathcal{S}}

\newcommand{\GG}{\mathcal{G}}
\newcommand{\FF}{\mathcal{F}}
\newcommand{\UU}{\mathcal{U}}
\newcommand{\MM}{\mathcal{M}}
\newcommand{\JJ}{\mathcal{J}}
\newcommand{\EE}{\mathcal{E}}

\newcommand{\eps}{\varepsilon}

\newcommand{\disp}{\displaystyle}

\newcommand{\dmu}{\,d\mu}

\providecommand{\abs}[1]{\lvert#1\rvert}

\usepackage[utf8]{inputenc}
\usepackage[backref=page,colorlinks=true, linkcolor=blue, citecolor=blue, urlcolor=blue]{hyperref}
\usepackage[capitalise]{cleveref} 
\makeatletter

\def\MRbibitem{\@ifnextchar[\my@lbibitem\my@bibitem}

\def\mybiblabel#1#2{\@biblabel{{\hyperref{http://www.ams.org/mathscinet-getitem?mr=#1}{}{}{#2}}}}

\def\myhyperanchor#1{\Hy@raisedlink{\hyper@anchorstart{cite.#1}\hyper@anchorend}}

\def\my@lbibitem[#1]#2#3#4\par{%
  \item[\mybiblabel{#2}{#1}\myhyperanchor{#3}\hfill]#4%
  \@ifundefined{ifbackrefparscan}{}{\BR@backref{#3}}%
  \if@filesw{\let\protect\noexpand\immediate
    \write\@auxout{\string\bibcite{#3}{#1}}}\fi\ignorespaces%
}

\def\my@bibitem#1#2#3\par{%
  \refstepcounter\@listctr
  \item[\mybiblabel{#1}{\the\value\@listctr}\myhyperanchor{#2}\hfill]#3%
  \@ifundefined{ifbackrefparscan}{}{\BR@backref{#2}}%
  \if@filesw\immediate\write\@auxout
    {\string\bibcite{#2}{\the\value\@listctr}}\fi\ignorespaces%
}

\makeatother

\title[]{On the structure of the Birkhoff-irregular set for Subshifts of Finite Type}
\author{Sebastian Burgos }
\date{}

\begin{document}

\begin{abstract}
    We study the set of irregular points for topologically mixing subshifts of finite type. It is well known that despite the irregular set having zero measure for every invariant measure, it has full topological entropy and full Hausdorff dimension. We establish that the irregular set is not only abundant in terms of its dimensional properties, but also contains uncountably many pairwise disjoint invariant subsets, each of them dense and carrying full entropy and dimension. This result deepens our understanding of the complexity of irregular points in dynamical systems, highlighting their intricate structure and suggesting avenues for further exploration in related areas.
\end{abstract}

\maketitle
\section{Introduction}

Given a compact metric space $(X,d)$ and a continuous function $f\colon X\to X$, a point $x_0\in X$ is called \textit{(Birkhoff) irregular} if there is a continuous function $\varphi\colon X\to\R$ such that its Birkhoff average at $x_0$
$$\lim_{n\to\infty}\frac{1}{n}\sum_{j=0}^{n-1}\varphi(f^j(x_0))$$
does not exist. The \textit{irregular set of $\varphi$} is denoted by $\hat{X}(\varphi)$ and the \textit{irregular set of $X$} is denoted by $\hat{X}$. By Birkhoff Ergodic Theorem, for every $f$-invariant measure $\mu$ we have $\mu(\hat{X})=0$, so the irregular set is negligible from the measure-theoretic point of view. Thus, it is natural to measure its ``size" using different ``tools". We will focus on the most common ones with the first one being the Hausdorff dimension and the second one being the topological entropy; the former is geometric in nature while the latter is dynamic in nature. For topological entropy, we use its Carath\'eodory dimensional definition (see Section \ref{DIMandENT}), where it is not required that the set is compact or invariant.

Pesin and Pitskel \cite{PesinPitskel} studied the irregular set for the full shift on two symbols, and showed that it has full topological entropy. Barreira and Schmeling \cite{BS} showed that for subshifts of finite type, the irregular set carries full topological entropy and full Hausdorff dimension, and applied these results to uniformly hyperbolic dynamical systems. Thompson \cite{T1} studied the irregular set for any topological dynamical system satisfying the \textit{specification property}, and showed that it has full topological pressure with respect to any continuous function, in particular, full topological entropy. Later Thompson \cite{T2} showed that the irregular set for systems with the \textit{almost specification property} has full topological entropy and full Hausdorff dimension. Gelfert, Pacifico and Sanhueza \cite{GPS} gave sufficient conditions for a dynamical system to have irregular set with full topological entropy, applied to examples including nonuniformly hyperbolic surface diffeomorphisms with positive entropy and rational functions on the Riemann sphere. These results give us the property that even though the irregular set is negligible from the viewpoint of invariant measures, many times it is expected to have the largest possible size in terms of Hausdorff dimension and entropy.

In this paper we further advanced these results and reveal a highly sophisticated structure of the irregular set, namely, we show that the irregular set can be split in \emph{uncountably} many invariant subsets each of which carries full Hausdorff dimension and full topological entropy. Our main result is as follows.

\begin{teor}\label{mainthm}
    Let $(X,\sigma)$ be a topologically mixing subshift of finite type. Then, there is an uncountable collection $\NN$ of Borel measurable subsets of $X$ satisfying:
    \begin{enumerate}
        \item There is $\varphi\in C(X)$ such that every $N\in\NN$ is contained in $\hat{X}(\varphi)$;
        \item every $N\in\NN$ is dense in $X$ and satisfies $\sigma(N)\subset N$;
        \item for every distinct $N,N'\in\NN$ we have $N\cap N'=\emptyset$;
        \item for every $N\in\NN$ we have $\dim_HN=\dim_HX$ and $h(\sigma|N)=h_{top}(\sigma)$.
    \end{enumerate}
\end{teor}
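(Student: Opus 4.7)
The plan is to refine the Moran--Cantor construction of Barreira--Schmeling and Thompson, which produces a single full-entropy subset of $\hat{X}(\varphi)$, by decorating it with a binary tail code so that tail equivalence classes of the code parameterize an uncountable family of disjoint invariant full-entropy subsets.

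First I would fix a continuous $\varphi\in C(X)$ admitting two ergodic $\sigma$-invariant measures $\mu_-,\mu_+$ with $c_-:=\int\varphi\,d\mu_-<\int\varphi\,d\mu_+=:c_+$; such $\varphi$ exists because a topologically mixing SFT carries many ergodic invariant measures. Following Thompson's strategy, I would build a layered Moran construction with fast-growing positions $N_k$ and deviation lengths $D_k=O(N_k)$ satisfying $N_{k+1}\gg N_k$: most of the infinite word (the long stretches between $N_k+D_k$ and $N_{k+1}$) is MME-typical, giving full entropy, while at each $[N_k,N_k+D_k)$ one inserts a short ``deviation block'' that is $\mu_+$-typical or $\mu_-$-typical, pulling the Birkhoff average of $\varphi$ toward $c_+$ or $c_-$. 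Since the total deviation length up to time $N_{k+1}$ is $o(N_{k+1})$, the Carath\'eodory entropy of the resulting set equals $h_{top}(\sigma)$, while the oscillation forces $\liminf=c_-$ and $\limsup=c_+$ so the set sits in $\hat{X}(\varphi)$. I would further prepend to each deviation block a distinct ``marker word'' $M_{k,i}$ of length $\ell_k\to\infty$ (with $i\in\{0,1\}$ recording the sign of the deviation) and restrict every MME-typical stretch and every deviation body to a sub-SFT avoiding every marker; calibrating $\ell_k$ appropriately keeps the entropy loss summable. For $\alpha\in 2^{\N}$ the Moran set $F_\alpha$ obtained by using $M_{k,\alpha_k}$ at the $k$-th deviation has full topological entropy and full Hausdorff dimension $\dim_H X$ (by the standard symbolic formula) and lies in $\hat{X}(\varphi)$.

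Next, using the tail equivalence $\alpha\sim\alpha'\iff\alpha_k=\alpha'_k$ for all large $k$, whose $2^{\aleph_0}$ equivalence classes will index $\NN$, I would set
\[ N_{[\alpha]}:=\bigcup_{\beta\in[\alpha]}\bigcup_{n\ge 0}\sigma^n F_\beta. \]
This is Borel, forward $\sigma$-invariant, inherits full entropy and dimension and inclusion in $\hat{X}(\varphi)$ from any $F_\beta\subset N_{[\alpha]}$, and is dense in $X$ because the specification property allows the Moran construction to start with any prescribed admissible prefix. The main obstacle is disjointness of $N_{[\alpha]}$ and $N_{[\alpha']}$ for different tail classes. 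Here the markers earn their keep: $M_{k,i}$ cannot appear inside an MME-typical stretch, inside a deviation body, or inside a specification gap once $\ell_k$ exceeds the specification constant, so each $x\in F_\beta$ contains $M_{k,\beta_k}$ exactly once, at the known position $N_k$, for all large $k$. Consequently the markers visible in $x\in\sigma^n F_\beta$ uniquely recover both the tail of $\beta$ and the offset $n$, and a hypothetical common point of $\sigma^n F_\beta$ and $\sigma^m F_{\beta'}$ would force $n=m$ and $\beta_k=\beta'_k$ for all large $k$, hence $\beta\sim\beta'$, contradicting $[\alpha]\ne[\alpha']$. The delicate step is the joint quantitative calibration of $N_k,D_k,\ell_k$ ensuring simultaneously that the marker-avoiding sub-SFT has entropy $h_{top}(\sigma)-o(1)$ per scale, that the deviation density vanishes (so $F_\alpha$ has full entropy), and that the deviations are nonetheless large enough to drive the Birkhoff oscillation (so $F_\alpha\subset\hat{X}(\varphi)$).
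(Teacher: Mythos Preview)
Your approach is genuinely different from the paper's. The paper does not use a Moran--Cantor construction at all; instead it builds a single bi-Lipschitz involution $L\colon X\to X$ that swaps two carefully designed words $\xi=wsw$, $\eta=wtw$ on coordinate blocks lying between consecutive factorials, and applies $L$ to the generic sets $B_{\mu_q}$ of a one-parameter family of equilibrium states $\mu_q$ (for locally constant potentials $q\phi$) converging to the Parry measure as $q\to 0^+$. Each $N\in\NN$ is $\bigcup_k L(B_{\mu_{q_k}})$ for a suitable null sequence $(q_k)$, and disjointness is obtained by choosing the sequences so that the values $\{\mu_{q_k}([\xi])\}$ are pairwise disjoint across the family (via a rational-independence trick on auxiliary sequences), because $L=L^{-1}$ sends any point of $N$ back to a point whose $\varphi$-Birkhoff average recovers one of the numbers $\mu_{q_k}([\xi])$. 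The paper's route gives full Hausdorff dimension for free (bi-Lipschitz maps preserve it) and makes invariance and irregularity elementary, but pays with a nontrivial proof that $h(\sigma|L(B_k))\ge h(\mu_k)$, since $L$ does not commute with $\sigma$; this is handled via a strong law of large numbers for quasi-uncorrelated variables applied to $-\log\mu_k(C_n(L(x)))$. Your marker scheme would instead obtain full entropy directly from the usual Moran pressure-distribution estimates, at the cost of the combinatorial bookkeeping you flag.

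One genuine gap in your sketch: the assertion $\liminf=c_-$, $\limsup=c_+$ does not follow from the construction you describe. Since the MME-typical stretches dominate at the times $N_{k+1}$, the Birkhoff average there tends to $\int\varphi\,dm$, while at time $N_k+D_k$ it is only a convex combination of $\int\varphi\,dm$ and $c_{\alpha_k}$; moreover, if $\alpha$ is eventually constant (and every tail class contains such $\alpha$, since you are closing under shifts of the index) only one deviation sign ever occurs. You therefore need either the extra hypothesis $c_\pm\ne\int\varphi\,dm$, or, more robustly, to decouple the marker bit $\alpha_k$ from the sign of the $k$-th deviation (e.g.\ alternate the deviation signs deterministically and let $\alpha_k$ choose between two markers of the same sign), so that irregularity holds for every $\alpha$. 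You also need a synchronization device (a forbidden-overlap pattern built into each $M_{k,i}$) to ensure markers cannot be created across block boundaries, not merely avoided inside blocks; without this the ``exactly once, at the known position'' decodability claim underlying your disjointness argument does not follow.
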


Unlike previous work that are mentioned above, we follow the original approach by Pesin and Pitskel \cite{PesinPitskel}, which is to construct a bi-Lipschitz homeomorphism of $X$ which moves Birkhoff generic trajectories to Birkhoff irregular trajectories. However, we extend the setting from the basic full shift on two symbols considered in \cite{PesinPitskel} to more general subshifts of finite type. In order to achieve this we make substantial modifications to the approach in \cite{PesinPitskel}. Since horseshoes and iterated functions systems can be coded using full shifts and conjugacies preserve entropy, we obtain the following corollaries as direct consequences of Theorem \ref{mainthm}.

\begin{cor}\label{Cor1}
    Let $f\colon\Lambda\to\Lambda$ be a Smale horseshoe map. Then, there is an uncountable collection $\NN$ of Borel measurable subsets of $\Lambda$ satisfying:
    \begin{enumerate}
        \item There is $\varphi\in C(\Lambda)$ such that every $N\in\NN$ is contained in $\hat{\Lambda}(\varphi)$;
        \item every $N\in\NN$ is dense in $\Lambda$ and satisfies $f(N)\subset N$;
        \item for every distinct $N,N'\in\NN$ we have $N\cap N'=\emptyset$;
        \item for every $N\in\NN$ we have $h(f|N)=h_{top}(f)$.
    \end{enumerate}
\end{cor}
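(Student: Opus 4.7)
The plan is to deduce Corollary \ref{Cor1} directly from Theorem \ref{mainthm} by transporting the collection across the standard coding conjugacy. Recall that a Smale horseshoe map $f\colon\Lambda\to\Lambda$ is topologically conjugate to the full two-sided shift $\sigma\colon\Sigma_2\to\Sigma_2$, which is itself a topologically mixing subshift of finite type. Let $h\colon\Sigma_2\to\Lambda$ denote this conjugacy, so $h\circ\sigma=f\circ h$ and $h$ is a homeomorphism.

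First, I would apply Theorem \ref{mainthm} to $X=\Sigma_2$ to obtain an uncountable family $\NN_0$ of Borel subsets of $\Sigma_2$ and a function $\varphi_0\in C(\Sigma_2)$ witnessing the four conclusions. Then I would set
\[
\NN\ :=\ \{\,h(N):N\in\NN_0\,\}\qquad\text{and}\qquad \varphi\ :=\ \varphi_0\circ h^{-1}\in C(\Lambda).
\]
Each $h(N)$ is Borel because $h$ is a Borel isomorphism, and $\NN$ is uncountable because $h$ is a bijection.

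Next I would verify the four properties in turn. For (1), the conjugacy identity gives $\varphi(f^j(h(x)))=\varphi_0(\sigma^j(x))$ for every $j\ge 0$, so the Birkhoff averages of $\varphi$ at $h(x)$ coincide with those of $\varphi_0$ at $x$; hence $x\in\hat\Sigma_2(\varphi_0)$ implies $h(x)\in\hat\Lambda(\varphi)$, giving $h(N)\subset\hat\Lambda(\varphi)$. For (2), density and forward invariance transfer under $h$: $f(h(N))=h(\sigma(N))\subset h(N)$, and $\overline{h(N)}=h(\overline{N})=h(\Sigma_2)=\Lambda$. For (3), the injectivity of $h$ yields $h(N)\cap h(N')=\emptyset$ whenever $N\cap N'=\emptyset$. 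For (4), topological entropy is a conjugacy invariant, so $h(f|h(N))=h(\sigma|N)=h_{top}(\sigma)=h_{top}(f)$.

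There is no real obstacle here; the corollary is essentially a formal translation of Theorem \ref{mainthm} through the coding map, and the only point requiring a moment's care is that $h$ is a Borel isomorphism so that Borel measurability of $h(N)$ is automatic. Observe that, unlike the theorem, the corollary does not assert $\dim_H h(N)=\dim_H\Lambda$: a purely topological conjugacy need not preserve Hausdorff dimension, so this clause is dropped. If one fixes a smooth horseshoe and works with the induced metric on $\Lambda$ so that $h$ becomes bi-Lipschitz on each cylinder, the dimensional conclusion can be recovered as well, but this is not needed for the statement as written.
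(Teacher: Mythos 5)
Your argument is correct and is exactly the route the paper takes: the paper offers nothing beyond the remark that horseshoes are coded by full shifts and that (Carath\'eodory) topological entropy of arbitrary subsets is a conjugacy invariant, and your write-up simply supplies the routine verifications of (1)--(4) under the conjugacy $h$, including the correct observation that the Hausdorff-dimension clause must be dropped because a topological conjugacy need not preserve dimension. The only point you (like the paper) pass over silently is that the horseshoe is conjugate to the \emph{two-sided} full shift, whereas Theorem \ref{mainthm} is stated and proved for one-sided subshifts of finite type; to be fully rigorous one should either note that the construction in the proof (the map $L$, the Markov measures, the sets $B_k$) carries over verbatim to the two-sided setting since it only involves non-negative coordinates, or pull the one-sided conclusion back through the natural projection onto the one-sided shift.
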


\begin{cor}\label{Cor2}
    Let $X\subset\R^d$ be the attractor of an iterated function system $(T_1,\ldots, T_m)$ satisfying the strong separation condition. Consider the dynamics $f\colon X\to X$ defined by the inverses of the maps $T_i$. Then, there is an uncountable collection $\NN$ of Borel measurable subsets of $X$ satisfying:
    \begin{enumerate}
        \item There is $\varphi\in C(X)$ such that every $N\in\NN$ is contained in $\hat{X}(\varphi)$;
        \item every $N\in\NN$ is dense in $X$ and satisfies $f(N)\subset N$;
        \item for every distinct $N,N'\in\NN$ we have $N\cap N'=\emptyset$;
        \item for every $N\in\NN$ we have $h(f|N)=h_{top}(f)$.
    \end{enumerate}
\end{cor}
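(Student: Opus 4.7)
The plan is to follow the Pesin--Pitskel strategy of transporting Birkhoff-generic orbits of a carefully chosen reference measure into Birkhoff-irregular ones via bi-Lipschitz modifications, with the modification depending on an uncountable parameter so that the resulting images carve $\hat X(\varphi)$ into uncountably many invariant pieces, each retaining full size.

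\emph{Setup.} Topological mixing of the SFT yields the specification property with some constant $M\geq 0$: any two admissible words can be spliced via a connector of length at most $M$. Let $\mu$ be the Parry measure, the unique ergodic measure of maximal entropy, which satisfies $h(\mu)=h_{top}(\sigma)$ and $\dim_H\mu=\dim_HX$ for the standard symbolic metric. Pick a continuous observable $\varphi\in C(X)$ together with a second invariant measure $\nu$ (for instance, a periodic orbit measure) with $\int\varphi\,d\mu\neq\int\varphi\,d\nu$. Denote by $G\subset X$ the Birkhoff-generic set of $\mu$; it has full $\mu$-measure, full Hausdorff dimension, and full topological entropy.

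\emph{Family of bi-Lipschitz modifications.} Parameterize $\alpha\in I$ over an uncountable family of sufficiently lacunary increasing integer sequences $(n^\alpha_k)_k$. For each $\alpha$, define $\Phi_\alpha\colon X\to X$ that leaves $x\in X$ unchanged outside the windows $[n^\alpha_{2k},n^\alpha_{2k+1})$ and replaces the coordinates inside those windows with a fixed $\nu$-generic block, inserting specification connectors of length at most $M$ to restore admissibility. The lacunarity is chosen so that for every $x\in G$ the Birkhoff averages of $\varphi$ along $\Phi_\alpha(x)$ oscillate between $\int\varphi\,d\mu$ and $\int\varphi\,d\nu$; in particular $\Phi_\alpha(G)\subset\hat X(\varphi)$. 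The key uniform estimate is that $\Phi_\alpha$ is bi-Lipschitz with constants depending only on $M$, since only bounded-length corrections are inserted at each window.

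\emph{Assembling $\NN$.} To each $\alpha$ attach a shift-invariant fingerprint $F_\alpha\subset\R$, namely the accumulation set of the Birkhoff-average sequence along points of $\Phi_\alpha(G)$, arranged (via secondary oscillation scales that detect $\alpha$) so that $F_\alpha\neq F_{\alpha'}$ for $\alpha\neq\alpha'$. Define
\begin{equation*}
N_\alpha=\bigl\{y\in X\ :\ \text{the sequence }\bigl(\tfrac{1}{n}\sum_{j=0}^{n-1}\varphi(\sigma^jy)\bigr)_n\ \text{has accumulation set}\ F_\alpha\bigr\}.
\end{equation*}
Then $N_\alpha$ is Borel and shift-invariant (a tail property, unchanged under one shift up to $O(1/n)$), it contains $\Phi_\alpha(G)$, and the family $\{N_\alpha\}$ is pairwise disjoint by construction of the fingerprints. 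Density is obtained by prepending, via specification, any prescribed prefix to a point of $N_\alpha$ without altering its accumulation set. Bi-Lipschitzness of $\Phi_\alpha$ gives $\dim_HN_\alpha\geq\dim_H\Phi_\alpha(G)=\dim_HG=\dim_HX$.

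\emph{Main obstacle.} The most delicate clause is (4) for topological entropy, because $\Phi_\alpha$ does not conjugate $\sigma$ to itself, so the equality $h(\sigma|\Phi_\alpha(G))=h(\sigma|G)$ is not automatic. I intend to handle this with a Moran-type construction directly inside $N_\alpha$: between the insertion windows, admit any $\mu$-typical block of the current gap length $\ell$, of which Shannon--McMillan--Breiman furnishes at least $e^{\ell(h_{top}(\sigma)-o(1))}$, with specification providing admissibility. Feeding these combinatorics into the Carathéodory definition of topological entropy for noncompact sets then forces $h(\sigma|N_\alpha)\geq h_{top}(\sigma)$, matching the trivial upper bound. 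This step is where the passage from the full shift of \cite{PesinPitskel} to a general SFT will demand the most substantial modification, as both the insertions and the combinatorial enumeration must respect the transition graph rather than range freely.
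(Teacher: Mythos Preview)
The paper treats this corollary as an immediate consequence of Theorem~\ref{mainthm}: the strong separation condition gives a topological conjugacy between $(X,f)$ and the full shift on $m$ symbols, and since topological entropy, density, invariance, disjointness, and Birkhoff irregularity all transfer under conjugacy, the collection $\NN$ from Theorem~\ref{mainthm} pushes forward directly. Your proposal instead re-proves the main theorem from scratch on the symbolic side, which is far more than the corollary calls for.

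Setting that aside, your strategy for the symbolic statement is structurally different from the paper's. The paper fixes a \emph{single} involution $L$ (swapping two specific words $\xi\leftrightarrow\eta$ on alternating factorial-length blocks) and obtains uncountably many sets by varying a \emph{sequence of ergodic measures} $\mu_{q_k}$ whose entropies climb to $h_{top}(\sigma)$; disjointness comes from the distinct values $\mu_{q_k}([\xi])$, which are recovered as honest Birkhoff \emph{limits} of $\varphi$ along $L^{-1}(y)$. You instead fix two measures $\mu,\nu$ and vary the \emph{map} $\Phi_\alpha$ through an uncountable family of window sequences. This buys you nothing over the paper's approach and introduces two genuine problems.

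First, your $\Phi_\alpha$ is not bi-Lipschitz: replacing the coordinates in a window by a \emph{fixed} $\nu$-block collapses all points that agree outside the window, so $\Phi_\alpha$ is not injective. (The paper's $L$ avoids this by being an involution that swaps two words of equal length rather than overwriting.) Second, and more seriously for clause~(3), your fingerprint mechanism does not separate the $N_\alpha$. With $\mu,\nu$ fixed and windows chosen lacunary enough to force oscillation between $\int\varphi\,d\mu$ and $\int\varphi\,d\nu$, the accumulation set of the Birkhoff averages along $\Phi_\alpha(x)$ is, for every $\alpha$ and every $x\in G$, the \emph{same} closed interval $[\min,\max]$ (the averages move by $O(1/n)$ per step and visit both endpoints). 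Your phrase ``secondary oscillation scales that detect $\alpha$'' does not resolve this: any scale information is washed out by the tail-set definition of $N_\alpha$, which depends only on the accumulation set. The paper sidesteps this entirely because $L^{-1}(y)$ is a genuine $\mu_{q_k}$-generic point, so its Birkhoff average \emph{converges} to a value that pins down which $N$ the point belongs to.
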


\textit{Acknowledgements.} I would like to express my sincere appreciation to my advisor, Yakov Pesin, for his invaluable guidance and support throughout this research. I also wish to thank Scott Schmieding and Federico Rodriguez-Hertz for the comments and insights that contributed to this work. Additionally, I am deeply grateful to Jairo Bochi, whose input in our discussions was very helpful in generalizing the previous version of this paper. Finally, I would like to thank Sebastian Pavez, Ygor de Jesus, and the Penn State Student Dynamics Group for their encouragement and helpful comments.

\section{Hausdorff dimension and Topological entropy}\label{DIMandENT}

We follow the definitions of Hausdorff dimension and topological entropy from \cite{PesinDim}, using Carath\'eodory structures.

\subsection{Carath\'eodory Structures}
Let $X$ be a set and let $I$ be a set of indices. Let $\FF=\{U_i:i\in I\}$ be a collection of subsets of $X$. Assume that there exist functions $\xi,\psi,\eta\colon I\to[0,\infty)$ satisfying:
\begin{enumerate}
    \item[\textbf{C1.}] there is $i_0\in I$ such that $U_{i_0}=\emptyset$; if $U_i=\emptyset$ then $\psi(i)=\eta(i)=0$; if $U_i\neq\emptyset$ then $\psi(i)>0$ and $\eta(i)>0$; 
    \item[\textbf{C2.}] for every $\delta>0$ there is $\eps>0$ such that $\eta(i)\leq\delta$ for any $i\in I$ with $\psi(i)\leq\eps$;
    \item[\textbf{C3.}] for any $\eps>0$ there is a finite or countable collection $J\subset I$ that covers $X$ (i.e. $X\subset\bigcup_{i\in J}U_j$) and $\psi(J):=\sup\{\psi(i):i\in J\}\leq\eps$.
\end{enumerate}

We say that the tuple $\tau=(\FF,\xi,\eta,\psi)$ is a \textit{Carath\'eodory dimension structure} (or \textit{C-structure}).

For $\alpha\in\R$ and $Z\subset X$, the expression
\begin{equation}\label{measHdef}
    m_C(Z,\alpha):=\lim_{\eps\to 0}\inf\left\{\sum_{i\in J}\xi(i)\eta(i)^{\alpha}:J\subset I\text{ covers }Z,\psi(J)\leq\eps\right\}
\end{equation}
defines the \textit{$\alpha$-Carat\'eodory outer measure} on $X$, which induces a $\sigma$-additive measure on $X$. It can be shown \cite[Proposition 1.2]{PesinDim} that given $Z\subset X$, there is a critical value $-\infty\leq\alpha_{\star}\leq\infty$ such that $m_C(Z,\alpha)=\infty$ for every $\alpha<\alpha_{\star}$ and $m_C(Z,\alpha)=0$ for every $\alpha>\alpha_{\star}$.

\begin{defn}
    The Carath\'eodory dimension of a set $Z\subset X$ is defined by
    \begin{equation}
        \dim_CZ:=\inf\{\alpha:m_C(Z,\alpha)=0\}=\sup\{\alpha:m_C(Z,\alpha)=\infty\}.
    \end{equation}
\end{defn}

This Carath\'eodory dimension satisfies the following properties \cite[Theorem 1.1]{PesinDim}.

\begin{prop}\label{propDimC}
    The following hold:
    \begin{enumerate}
        \item $\dim_C\emptyset\leq 0$;
        \item if $Z_1\subset Z_2\subset X$, then $\dim_CZ_1\leq\dim_CZ_2$;
        \item $\dim_C\left(\displaystyle\bigcup_{i\in\N}Z_i\right)=\displaystyle\sup_{i\in\N}\dim_CZ_i$.
    \end{enumerate}
\end{prop}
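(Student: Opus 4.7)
All three properties are structural consequences of the definition of $m_C(Z,\alpha)$ in \eqref{measHdef} together with the fact that $m_C(\cdot,\alpha)$ is an outer measure on $X$. The guiding observation is that each statement reduces to an assertion about the family of outer measures $m_C(\cdot,\alpha)$: the inequality $\dim_C Z \leq \beta$ is equivalent to $m_C(Z,\alpha) = 0$ for every $\alpha > \beta$, so it suffices to establish the corresponding inequalities at the level of $m_C$ and then pass to the critical exponent.

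For (1), axiom \textbf{C1} supplies an index $i_0 \in I$ with $U_{i_0} = \emptyset$ and $\psi(i_0) = \eta(i_0) = 0$. For any $\eps > 0$ the singleton $J = \{i_0\}$ covers $\emptyset$ and has $\psi(J) = 0 \leq \eps$, and for any $\alpha > 0$ the sum $\xi(i_0)\eta(i_0)^\alpha$ vanishes. Hence $m_C(\emptyset,\alpha) = 0$ for every $\alpha > 0$, giving $\dim_C \emptyset \leq 0$. For (2), any countable cover $J$ of $Z_2$ with $\psi(J) \leq \eps$ is also a cover of $Z_1 \subset Z_2$, so the infimum in \eqref{measHdef} taken over covers of $Z_1$ is no larger than the one taken over covers of $Z_2$. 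This yields $m_C(Z_1,\alpha) \leq m_C(Z_2,\alpha)$ for every $\alpha$, and therefore $\dim_C Z_1 \leq \dim_C Z_2$.

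For (3), the inequality $\sup_i \dim_C Z_i \leq \dim_C\!\left(\bigcup_i Z_i\right)$ is immediate from (2) applied to each inclusion $Z_i \subset \bigcup_j Z_j$. For the reverse, fix any $\alpha > \sup_i \dim_C Z_i$. Then $m_C(Z_i,\alpha) = 0$ for every $i \in \N$, and since $m_C(\cdot,\alpha)$ is an outer measure it is $\sigma$-subadditive, whence
\[
m_C\!\left(\bigcup_{i \in \N} Z_i,\, \alpha\right) \;\leq\; \sum_{i \in \N} m_C(Z_i,\alpha) \;=\; 0.
\]
Consequently $\dim_C\!\left(\bigcup_i Z_i\right) \leq \alpha$, and letting $\alpha \downarrow \sup_i \dim_C Z_i$ completes the argument.

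The only genuinely non-formal step is the $\sigma$-subadditivity of $m_C(\cdot,\alpha)$ invoked in (3); this is a standard Carathéodory-type argument that is part of the content attributed to \cite[Proposition 1.2]{PesinDim} in the excerpt. If pressed to produce it directly, I would fix $\eps > 0$ and $\delta > 0$, choose for each $i$ a countable cover $J_i \subset I$ of $Z_i$ with $\psi(J_i) \leq \eps$ and $\sum_{j \in J_i} \xi(j)\eta(j)^\alpha$ within $\delta/2^i$ of the corresponding infimum, and then observe that $J := \bigcup_i J_i$ is a countable cover of $\bigcup_i Z_i$ with $\psi(J) \leq \eps$ whose total weight is bounded by $\sum_i m_C(Z_i,\alpha) + \delta$; letting $\delta \to 0$ and then $\eps \to 0$ yields the claim. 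I expect this subadditivity step, together with the need to argue carefully that covers of $\emptyset$ can be realised with $\alpha$-weight zero in (1), to be the only substantive verifications — the remaining passages from $m_C$ to $\dim_C$ are purely formal.
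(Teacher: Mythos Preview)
Your argument is correct and is the standard one. Note, however, that the paper does not actually supply a proof of this proposition: it merely states the result and cites \cite[Theorem 1.1]{PesinDim}. What you have written is essentially a self-contained reproduction of the argument behind that reference, and it goes through without issue. The only microscopic point worth flagging is that in your $\sigma$-subadditivity sketch you implicitly use that the pre-limit quantities $M(Z,\alpha,\eps):=\inf\{\sum_{j\in J}\xi(j)\eta(j)^\alpha:\psi(J)\leq\eps\}$ are monotone in $\eps$, so that $M(Z_i,\alpha,\eps)\leq m_C(Z_i,\alpha)$ and one may pass to the limit $\eps\to 0$ after summing over $i$; this is routine and poses no difficulty.
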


Now suppose $(X,\mu)$ is a probability space such that $\FF$ is contained in the measurable sets of $X$. We define the \textit{Carath\'eodory dimension of $\mu$} by
\begin{equation}
    \dim_C\mu:=\inf\{\dim_CZ:\mu(Z)=1\}.
\end{equation}

\subsection{Hausdorff Dimension}
Let $(X,d)$ be a complete separable metric space and let $\FF=I$ be the set of all open subsets of $X$. Consider $\xi,\psi,\eta\colon\FF\to[0,\infty)$ defined by
\begin{equation}\label{StrucDim}
\xi(U)=1,\qquad\psi(U)=\eta(U)=\diam(U).
\end{equation}

\begin{defn}
Let $Z\subset X$. The Carath\'eodory dimension of $Z$ given by the $C$-structure $\tau=(\FF,\xi,\eta,\psi)$ defined by \eqref{StrucDim} is called the Hausdorff dimension of $Z$, and is denoted $\dim_HZ$.
\end{defn}

Assume now that $X$ is a Besicovitch metric space, then we have a way of computing the Hausdorff dimension of measures \cite[Theorem 7.1 and Appendix I]{PesinDim}. Given a Borel probability measure $\mu$ on $X$, its \textit{pointwise dimension} at $x\in X$ is defined by
\begin{equation}\label{pointdim}
    d_{\mu}(x):=\lim_{r\to 0}\dfrac{\log\mu(B(x,r))}{\log r},
\end{equation}
whenever the limit exists.

\begin{prop}\label{PropHdimMeas}
    If $d_{\mu}(x)=d$ for $\mu$-almost every $x\in X$, then $\dim_H\mu=d$.
\end{prop}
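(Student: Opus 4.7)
The plan is to establish $\dim_H\mu=d$ by proving the two inequalities $\dim_H\mu\geq d$ and $\dim_H\mu\leq d$ separately, each via a cover argument using the pointwise dimension hypothesis to compare $\mu$-mass with diameter.

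For the lower bound, I would fix an arbitrary Borel set $Z$ with $\mu(Z)=1$ and $\epsilon>0$, and set
$$Z_n^- := \{x\in Z : \mu(B(x,r))\leq r^{d-\epsilon} \text{ for all } 0<r\leq 1/n\}.$$
Since $d_\mu\equiv d$ on a full-measure set, $\bigcup_n Z_n^-$ has full $\mu$-measure, so $\mu(Z_{n_0}^-)>0$ for some $n_0$. A standard mass-distribution argument then gives $\dim_H Z_{n_0}^- \geq d-\epsilon$: any countable cover $\{U_i\}$ of $Z_{n_0}^-$ with $\diam U_i \leq 1/(2n_0)$ satisfies, for each $U_i$ meeting $Z_{n_0}^-$ at some $x_i$, $\mu(U_i)\leq\mu(B(x_i,\diam U_i))\leq (\diam U_i)^{d-\epsilon}$, hence $\sum_i(\diam U_i)^{d-\epsilon}\geq \mu(Z_{n_0}^-)>0$. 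Letting $\epsilon\to 0$ and then infimizing over $Z$ yields $\dim_H\mu\geq d$.

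For the upper bound, I would exhibit a full-measure set of Hausdorff dimension at most $d+\epsilon$. Define
$$Z_n^+ := \{x : \mu(B(x,r))\geq r^{d+\epsilon} \text{ for all } 0<r\leq 1/n\},$$
so that $\mu(\bigcup_n Z_n^+)=1$ and, by Proposition \ref{propDimC}(3), it suffices to bound $\dim_H Z_n^+$. Here the Besicovitch property of $X$ enters: for $\delta<1/n$, the balls $\{B(x,\delta):x\in Z_n^+\}$ admit a countable subfamily $\{B(x_i,\delta)\}$ covering $Z_n^+$ with multiplicity at most a universal constant $N=N(X)$. Then
$$N \;\geq\; \sum_i \mu(B(x_i,\delta)) \;\geq\; \#\{i\}\cdot \delta^{d+\epsilon},$$
so the subcover uses at most $N\delta^{-(d+\epsilon)}$ balls, each of diameter $\leq 2\delta$. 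For any $\alpha>d+\epsilon$, this gives $\sum_i(2\delta)^\alpha \leq N\,2^\alpha\,\delta^{\alpha-d-\epsilon}\to 0$ as $\delta\to 0$, so $m_H(Z_n^+,\alpha)=0$ and $\dim_H Z_n^+\leq d+\epsilon$. Hence $\dim_H\mu\leq d+\epsilon$, and letting $\epsilon\to 0$ finishes the argument.

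The main obstacle is the upper bound: the pointwise lower bound $\mu(B(x,\delta))\geq\delta^{d+\epsilon}$ on its own does not constrain how many balls are needed to cover $Z_n^+$, and only the Besicovitch covering property converts it into an effective count. This is precisely why the hypothesis that $X$ is a Besicovitch metric space is imposed. A minor technical point to check along the way is the Borel measurability of $Z_n^\pm$, which follows from the lower semicontinuity of $r\mapsto \mu(B(x,r))$ and the Borel measurability of $x\mapsto \mu(B(x,r))$ for fixed $r$.
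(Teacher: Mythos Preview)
The paper does not supply a proof of this proposition; it simply cites \cite[Theorem 7.1 and Appendix I]{PesinDim}. Your argument is correct and is essentially the standard one found in that reference: a Frostman mass-distribution argument for the lower bound (which needs no covering hypothesis), and a Besicovitch covering argument for the upper bound, converting the pointwise lower estimate $\mu(B(x,\delta))\geq\delta^{d+\epsilon}$ into a bound on the number of balls in an efficient cover. So your proposal fills in exactly what the paper only references.
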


We also have the following proposition as immediate corollary of Proposition \ref{propDimC} (see also \cite[Proposition 3.3]{F}).

\begin{prop}\label{propDimH}
    The following hold:
    \begin{enumerate}
        \item $\dim_H\emptyset= 0$;
        \item if $Z_1\subset Z_2\subset X$, then $\dim_HZ_1\leq\dim_HZ_2$;
        \item $\dim_H\left(\displaystyle\bigcup_{i\in\N}Z_i\right)=\displaystyle\sup_{i\in\N}\dim_HZ_i$;
        \item if $L\colon X\to X$ is a bi-Lipschitz map, then $\dim_H L(Z)=\dim_HZ$ for every $Z\subset X$.
    \end{enumerate}
\end{prop}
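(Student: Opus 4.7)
The plan is to deduce parts (1), (2), and (3) directly from Proposition \ref{propDimC}, since they are simply the corresponding statements about Carath\'eodory dimension specialized to the C-structure with $\xi \equiv 1$ and $\psi = \eta = \diam$. Parts (2) and (3) are verbatim transcriptions. For part (1) one needs to complement the inequality $\dim_H \emptyset \leq 0$ coming from Proposition \ref{propDimC} with the reverse inequality, which holds by convention (either because one restricts $\alpha \geq 0$ in the definition, or because every $\alpha > 0$ makes the empty sum vanish trivially, pinning the critical value at $0$).

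The substantive step is part (4). I would exploit that a bi-Lipschitz map scales diameters by a bounded factor and is, in particular, a homeomorphism. Fix constants $K_1, K_2 > 0$ with $K_1 d(x,y) \leq d(L(x), L(y)) \leq K_2 d(x,y)$ for all $x,y \in X$. Given any open cover $\{U_i\}_{i \in J}$ of $Z$ with $\psi(J) \leq \eps$, the family $\{L(U_i)\}_{i \in J}$ is again an open cover, this time of $L(Z)$, since $L$ is a homeomorphism. The right-hand bi-Lipschitz inequality gives $\diam L(U_i) \leq K_2 \diam U_i \leq K_2 \eps$, and hence
$$\sum_{i \in J} \diam(L(U_i))^{\alpha} \leq K_2^{\alpha} \sum_{i \in J} \diam(U_i)^{\alpha}.$$
Taking infima over all such covers and letting $\eps \to 0$ yields $m_C(L(Z), \alpha) \leq K_2^{\alpha}\, m_C(Z, \alpha)$ for every $\alpha \geq 0$. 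Consequently $\dim_H L(Z) \leq \dim_H Z$. The reverse inequality follows by applying exactly the same argument to $L^{-1}$, which is bi-Lipschitz with constants $K_2^{-1}$ and $K_1^{-1}$.

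I do not anticipate any serious obstacle, since the statement is standard and none of the four items require entering the intricacies of Carath\'eodory measures beyond what Proposition \ref{propDimC} already packages. The two minor checks I would record carefully are (i) that the image of an open cover under $L$ is again an open cover, which is immediate from the fact that bi-Lipschitz maps are homeomorphisms, and (ii) that the uniform diameter distortion translates the scale-$\eps$ infimum in \eqref{measHdef} into a scale-$K_2\eps$ infimum at the cost of a single multiplicative factor $K_2^{\alpha}$, which disappears in the limit defining $\dim_H$.
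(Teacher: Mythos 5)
Your proposal is correct and follows the same route the paper itself takes: the paper presents this proposition as an immediate corollary of Proposition \ref{propDimC} (for items (1)--(3)) together with the standard Lipschitz distortion argument cited from Falconer (for item (4)), which is exactly the cover-pushing estimate $m_C(L(Z),\alpha)\leq K_2^{\alpha}m_C(Z,\alpha)$ you give. The only point worth recording is that for $\{L(U_i)\}$ to be an admissible cover in the paper's C-structure (which uses open sets of $X$) one should note that the relevant $L$ is a surjective homeomorphism of $X$ --- as is the case for the involution $L$ used later --- or else enlarge each $L(U_i)$ to an open neighborhood of comparable diameter.
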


\subsection{Topological entropy}\label{DefEntropy}
Let $(X,d)$ be a compact metric space and let $f\colon X\to X$ be a continuous map. Let $\UU$ be an open cover of $X$ and let
$$\S=\S_m(\UU):=\{{\bf U}=U_{i_0}\cdots U_{i_{m-1}}:U_{i_j}\in\UU\}$$
be the set of all strings of length $m$ of elements of $\UU$. Denote by $m({\bf U})$ the length of the string ${\bf U}$ and set $\S(\UU):=\bigcup_{m\geq 0}\S_m(\UU)$, which will be our set of indices. 

To a string ${\bf U}\in\S(\UU)$ we assign the set
\begin{equation}
    X({\bf U})=\{x\in X:f^j(x)\in U_{i_j}\text{ for }j=0,\ldots,m({\bf U})-1\}.
\end{equation}

Consider the collection of subsets $\FF=\FF(\UU)=\{X({\bf U}):{\bf U}\in\S(\UU)\}$ and define functions $\xi,\psi,\eta\colon\S(\UU)\to[0,\infty)$ by
\begin{equation}
    \xi({\bf U}):=1,\qquad\eta({\bf U}):=e^{-m({\bf U})},\qquad\psi({\bf U})=\frac{1}{m({\bf U})}.
\end{equation}

These functions determine a $C$-structure $\tau=\tau(\UU)=(\S,\FF,\xi,\eta,\psi)$. Observe that the Carath\'eodory outer measure, and hence the Carath\'eodory dimension, depend on the open cover $\UU$. Given $Z\subset X$, we denote its Carath\'eodory dimension given by this structure by $h_{\UU}(f|Z):=\dim_{C,\UU}Z$.

Let $\diam(\UU):=\sup\{\diam(U):U\in\UU\}$ be the diameter of the cover $\UU$. Then, Theorem 11.1 in \cite{PesinDim} allows us to make the following definition.

\begin{defn}
    Let $Z\subset X$. The topological entropy of $f$ restricted to the set $Z$ is defined by
    \begin{equation}
        h(f|Z):=\lim_{\diam(\UU)\to 0}h_{\UU}(f|Z).
    \end{equation}
\end{defn}

We have the corresponding proposition for topological entropy.

\begin{prop}\label{propDimE}
    The following hold:
    \begin{enumerate}
        \item $h(f|\emptyset)=0$;
        \item if $Z_1\subset Z_2\subset X$, then $h(f|Z_1)\leq h(f|Z_2)$;
        \item $h\left(f|\bigcup_{i\in\N}Z_i\right)=\displaystyle\sup_{i\in\N}h(f|Z_i)$.
    \end{enumerate}
\end{prop}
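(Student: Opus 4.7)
The plan is to reduce each of the three items to the analogous property of the Carathéodory dimension from Proposition~\ref{propDimC}, applied for each fixed open cover $\UU$ to the $C$-structure $\tau(\UU)$ of Section~\ref{DefEntropy}, and then to pass to the limit as $\diam(\UU)\to 0$. A key preliminary observation I would record first is that $h_\UU(f|Z)$ is monotone under refinement of $\UU$: if $\UU'$ refines $\UU$ then $h_{\UU'}(f|Z)\geq h_\UU(f|Z)$, so the limit defining $h(f|Z)$ is actually a supremum and in particular $h_\UU(f|Z)\leq h(f|Z)$ for every $\UU$ and every $Z\subset X$.

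Items~(1) and~(2) should then follow immediately. Proposition~\ref{propDimC}(1) gives $h_\UU(f|\emptyset)\leq 0$, and the non-negativity of $\xi,\eta$ forces $m_C\geq 0$ and hence $h_\UU(f|\emptyset)\geq 0$, so $h_\UU(f|\emptyset)=0$ for every $\UU$ and (1) follows in the limit. For (2), Proposition~\ref{propDimC}(2) gives $h_\UU(f|Z_1)\leq h_\UU(f|Z_2)$ for every $\UU$, and this inequality is preserved when $\diam(\UU)\to 0$.

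The only substantive item is~(3). For each $\UU$, Proposition~\ref{propDimC}(3) gives
\begin{equation*}
h_\UU\!\left(f\,\Big|\,\bigcup_{i\in\N} Z_i\right)=\sup_{i\in\N} h_\UU(f|Z_i).
\end{equation*}
The ``$\leq$'' direction then follows by bounding $\sup_i h_\UU(f|Z_i)\leq \sup_i h(f|Z_i)$ (using $h_\UU\leq h$) and letting $\diam(\UU)\to 0$ on the left; the ``$\geq$'' direction is just item~(2) applied to the inclusions $Z_j\subset\bigcup_i Z_i$, which yields $h(f|\bigcup_i Z_i)\geq h(f|Z_j)$ for every $j$ and hence the supremum.

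The main obstacle I foresee is justifying the preliminary monotonicity $h_{\UU'}(f|Z)\geq h_\UU(f|Z)$ under refinement. This is really a comparison between the two $C$-structures $\tau(\UU')$ and $\tau(\UU)$: the finer cover produces strings $\mathbf{U}'$ with sets $X(\mathbf{U}')$ contained in sets $X(\mathbf{U})$ of corresponding ``parent'' strings of the same length, while the weights $\xi,\eta,\psi$ depend only on the length $m(\mathbf{U})$. Hence any admissible covering of $Z$ in $\tau(\UU')$ of a given $\psi$-scale yields, via a choice of parent in $\UU$ for each element of $\UU'$, an admissible covering in $\tau(\UU)$ of the same weight, so that $m_{C,\UU}(Z,\alpha)\leq m_{C,\UU'}(Z,\alpha)$ for every $\alpha$, which forces the inequality of critical exponents. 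Once this comparison is in hand, the rest of the proposition is a formal consequence of Proposition~\ref{propDimC} and the interchange of supremum with monotone limit used above.
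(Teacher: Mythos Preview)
The paper does not actually prove Proposition~\ref{propDimE}; it is stated without proof as the direct analogue of Proposition~\ref{propDimC} (itself quoted from \cite[Theorem~1.1]{PesinDim}). Your proposal therefore supplies what the paper leaves implicit, and your overall strategy---establish each item for $h_\UU$ via Proposition~\ref{propDimC} at a fixed cover and then pass to the limit $\diam(\UU)\to 0$---is the standard and correct route.

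Two small points. First, your argument for item~(1) does not work as written: from $m_C\geq 0$ you cannot infer $\dim_{C,\UU}\emptyset\geq 0$, since in fact $m_C(\emptyset,\alpha)=0$ for every $\alpha$ (the empty subcollection already covers $\emptyset$), so literally $\dim_{C,\UU}\emptyset=-\infty$. The assertion $h(f|\emptyset)=0$ is a harmless convention rather than a consequence of the $C$-structure. Second, monotonicity under refinement alone does not yet give ``the limit as $\diam(\UU)\to 0$ is a supremum over covers'': you also need the Lebesgue number lemma, which guarantees that every open cover of sufficiently small diameter refines any prescribed cover $\UU_0$, and hence that $h_\UU(f|Z)\geq h_{\UU_0}(f|Z)$ for all such $\UU$. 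With these two tweaks your arguments for (2) and (3) go through; in particular, your treatment of the countable union---using $h_\UU\leq h$ to bound $\sup_i h_\UU(f|Z_i)\leq \sup_i h(f|Z_i)$ for the upper estimate, and item~(2) on the inclusions $Z_j\subset\bigcup_i Z_i$ for the lower estimate---is correct.
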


It is important to mention that for a compact invariant set $Z$, the topological entropy defined using this Carath\'eodory structure coincides with the classical topological entropy.

\section{Subshifts of Finite Type}

\subsection{Basic definitions}
Let $A$ be a $d\times d$ matrix with entries in $\{0,1\}$. The compact set
$$X=\{(x_i)_{i\geq 0}:x_i\in\{0,1,\ldots d-1\}\text{ and } A(x_i,x_{i+1})=1\text{ for every } i\}$$
is called a \textit{topological Markov shift}, and $A$ is called its \textit{adjacency (or transfer) matrix}. The \textit{shift map} $\sigma\colon X\to X$ is defined by $\sigma((x_i)_{i\geq0})=(x_{i+1})_{i\geq 0}$. The dynamical system $(X,\sigma)$ is called a (one-sided) \textit{subshift of finite type}. For $x=(x_i)_{i\geq 0}$ and $y=(y_i)_{i\geq 0}$, define a distance on $X$ by 
\begin{equation}\label{SymbolicMetric}
    d(x,y):=e^{-\min\{i\geq 0:x_i\neq y_i\}}
\end{equation}
when $x\neq y$, and $d(x,y)=0$ when $x=y$.

The matrix $A$ is called \textit{aperiodic} if there is $k\in\N$ such that all the entries of $A^k$ are positive. This is equivalent to the dynamical system $(X,\sigma)$ being topologically mixing. This is, for any two open sets $U,V\subset X$, there is $N\in\N$ such that $\sigma^{n}(U)\cap V\neq\emptyset$ for every $n\geq N$.

Let $\LL_n(X)$ be the set of words in the alphabet $\{0,1,\ldots,d-1\}$ which have length $n$. Given a word $w\in\LL_n(X)$, the \textit{cylinder set centered at $w$} is defined by
$$[w]:=\{x\in X:x_0=w_0,\ldots,x_{n-1}=w_{n-1}\}.$$

Given $x\in X$, denote $x|_{i}^j=x_ix_{i+1}\cdots x_j$ and define the \textit{cylinder of length $n$ centered at $x$} by $C_n(x):=[x|_0^{n-1}]$, which coincide with the open ball centered at $x\in X$ of length $e^{-n}$. These sets generate the topology on $X$ induced by the metric $d$ defined in \eqref{SymbolicMetric}.

The following theorem is well known, and will be very important for us describing the measure of maximal entropy of $(X,\sigma)$.
\begin{teor}[Perron-Frobenius Theorem]\label{PFT}
    Let $A$ be a non-negative aperiodic $d\times d$ matrix. Then, the following hold:
    \begin{enumerate}
        \item There is a positive real eigenvalue $\lambda$ of $A$ such that all other eigenvalues $\rho\in\C$ of $A$ satisfy $\abs{\rho}<\lambda$.

        \item The eigenvalue $\lambda$ is simple. That is, the corresponding eigenspace is one-dimensional.

        \item There are unique left- and right- eigenvectors ${\bf u}$ and ${\bf v}$ of $A$ corresponding to the eigenvalue $\lambda$ with ${\bf u}(i)>0$, ${\bf v}(i)>0$ for all $i$, also satisfying $\sum_i{\bf u}(i)=1$ and ${\bf u}^T{\bf v}=1$.
    \end{enumerate}
\end{teor}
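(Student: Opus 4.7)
The plan is to combine Brouwer's fixed point theorem with the strict positivity guaranteed by aperiodicity. Fix $k\in\N$ such that $A^k$ has only strictly positive entries, and consider the standard simplex $\Delta=\{x\in\R^d:x_i\geq 0,\ \sum_i x_i=1\}$. Since every $x\in\Delta$ satisfies $A^k x>0$ componentwise, $Ax\neq 0$ on $\Delta$, so the map $T\colon\Delta\to\Delta$ given by $T(x)=Ax/\|Ax\|_1$ is continuous and well-defined. Brouwer's theorem furnishes a fixed point $v\in\Delta$, providing $Av=\lambda v$ with $\lambda:=\|Av\|_1>0$; the identity $A^k v=\lambda^k v$ combined with $A^k>0$ then forces $v>0$ componentwise. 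Applying the same argument to $A^T$ (again non-negative and aperiodic) produces a strictly positive left eigenvector $u$ for some eigenvalue $\lambda'$, and pairing gives $\lambda'(u^T v)=u^T A v=\lambda(u^T v)$ with $u^T v>0$, hence $\lambda=\lambda'$.

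For the geometric simplicity in (2), suppose $w$ is a real eigenvector for $\lambda$ not proportional to $v$ (splitting into real and imaginary parts handles the complex case). After negating $w$ if necessary, $w$ has at least one strictly positive coordinate, so $t_\star:=\max\{t\geq 0:v-tw\geq 0\}$ is finite and positive. The vector $y:=v-t_\star w$ is non-negative, nonzero (otherwise $w=v/t_\star$ would be a positive multiple of $v$), and has at least one zero coordinate by the choice of $t_\star$. But $y$ is again a $\lambda$-eigenvector, so $A^k y=\lambda^k y$, while $A^k>0$ forces $A^k y>0$ strictly, contradicting the presence of a zero coordinate on the right-hand side.

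For the strict dominance in (1), let $\rho\in\C$ be an eigenvalue with $|\rho|=\lambda$ and eigenvector $z\in\C^d$. The entrywise triangle inequality $|Az|\leq A|z|$ together with $|Az|=\lambda|z|$ gives $A|z|\geq\lambda|z|$; pairing with $u>0$ yields $u^T(A|z|-\lambda|z|)=0$, so $A|z|=\lambda|z|$, and the simplicity step forces $|z|=cv$ for some $c>0$. Applying the same triangle inequality analysis to the equation $A^k z=\rho^k z$, the equality $|\sum_j (A^k)_{ij} z_j|=\sum_j (A^k)_{ij}|z_j|$ for each $i$, combined with $(A^k)_{ij}>0$ for all $i,j$, forces all coordinates of $z$ to share a common complex phase $e^{i\theta}$, so $z=cve^{i\theta}$ and $\rho=\lambda$. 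This settles (1); rescaling $u$ and $v$ so that $\sum_i u(i)=1$ and $u^T v=1$ gives (3).

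The main obstacle I anticipate is this last step: while existence and simplicity follow relatively cleanly from Brouwer plus the strict positivity of $A^k$, ruling out spectral values on the circle $|\rho|=\lambda$ requires carefully exploiting both the equality case of the triangle inequality and the strict positivity $A^k>0$, and this is precisely the step where aperiodicity (rather than mere non-negativity of $A$) is genuinely needed.
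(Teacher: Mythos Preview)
The paper does not prove the Perron--Frobenius Theorem; it is stated as a well-known result and used as a black box. So there is no ``paper's own proof'' to compare against, and your proposal is simply supplying an argument the paper omits.

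Your argument is essentially correct and follows a standard route (Brouwer on the simplex, strict positivity of $A^k$ to upgrade non-negativity to positivity, and the equality case of the triangle inequality for the spectral-radius step). One small gap: in part (1) you restrict attention to eigenvalues with $|\rho|=\lambda$ and show $\rho=\lambda$, but you never first establish that \emph{every} eigenvalue satisfies $|\rho|\leq\lambda$. This is easy to fill in with the same ingredients: for any eigenpair $(\rho,z)$ the triangle inequality gives $A|z|\geq|Az|=|\rho|\,|z|$, and pairing with the positive left eigenvector $u$ yields $\lambda\, u^T|z|=u^TA|z|\geq|\rho|\,u^T|z|$, hence $|\rho|\leq\lambda$. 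With that line inserted before your $|\rho|=\lambda$ analysis, the proof is complete. Also note that the paper's statement only asserts geometric simplicity (one-dimensional eigenspace), which is exactly what your argument delivers; you are not obligated to prove algebraic simplicity.
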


\subsection{Markov measures}

A $d\times d$ matrix $P$ is called \textit{stochastic} if its entries are between 0 and 1, and the elements of each of its rows add up to 1. A vector ${\bf p}\in\R^n$ is called a \textit{probability vector} if it has coordinates between 0 and 1, and their sum equals 1. A stochastic matrix $P$ is \textit{compatible} with $X$ if $P(i,j)>0$ only as long as $A(i,j)>0$. If $P$ is aperiodic, there is a unique probability vector ${\bf p}$ satisfying ${\bf p}^TP={\bf p}$.

\begin{defn}
    The Markov measure $\mu_P$ associated to $(P,{\bf p})$ is defined on cylinders as follows. For $w\in\LL_n(X)$, let
    \begin{equation}\label{Markovmeas}
        \mu_P([w]):={\bf p}(w_0)P(w_0,w_1)P(w_1,w_2)\cdots P(w_{n-2},w_{n-1}).
    \end{equation}
\end{defn}

Markov measures are ergodic $\sigma$-invariant Borel probability measures on $X$. Moreover, these measures have exponential decay of correlations with respect to H\"older observables (see for example \cite[Theorem 7.4.1]{VO}), i.e. there is $\gamma>0$ such that for every $\phi,\psi\colon X\to\R$ H\"older continuous, we have
$$\Big|\int\phi(\psi\circ\sigma^n)\dmu-\int\phi\dmu\int\psi\dmu\Big|\leq Ce^{-\gamma n}$$
for all $n\in\N$, where $C=C(\phi,\psi)$.

The measure-theoretic entropy of a Markov measure $\mu_P$ is given by
\begin{equation}\label{entropyMarkov}
h(\mu_P)=\sum_{i=0}^{d-1}{\bf p}(i)\sum_{j=0}^{d-1}-P(i,j)\log P(i,j).
\end{equation}

\subsection{Entropy, dimension and the Parry measure}

We will now state some facts about Hausdorff dimension of ergodic measures, as well as entropy and Hausdorf dimension of the entire space $X$. Recall that we are considering the metric defined by \eqref{SymbolicMetric}. We denote by $\MM_{\sigma}(X)$ the set of all $\sigma$-invariant probability measures on $X$, and for any $\mu\in\MM_{\sigma}(X)$ we denote by $h(\mu)$ its measure theoretical entropy.

The first proposition is an immediate corollary of Shannon-McMillan Breiman theorem and Proposition \ref{PropHdimMeas}. The second proposition can be found in \cite[Theorem A.2.9]{PesinDim}.

\begin{prop}\label{dim=entropy}
    Let $(X,\sigma)$ be a subshift of finite type. Then, for any ergodic measure $\mu$ on $X$ we have $\dim_H\mu=h(\mu)$.
\end{prop}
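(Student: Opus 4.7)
The plan is to verify the hypothesis of Proposition \ref{PropHdimMeas} by showing that the pointwise dimension $d_\mu(x)$ equals $h(\mu)$ for $\mu$-almost every $x \in X$; once this is established, the conclusion $\dim_H \mu = h(\mu)$ follows immediately. A preliminary remark is that the symbolic metric defined in \eqref{SymbolicMetric} is in fact an ultrametric, so balls of the same radius are either equal or disjoint. Hence $(X,d)$ is trivially a Besicovitch metric space and Proposition \ref{PropHdimMeas} applies.

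The first step is to identify metric balls with cylinder sets. Directly from \eqref{SymbolicMetric}, for any $x \in X$ and integer $n \geq 0$ the closed ball $\{y : d(x,y) \leq e^{-n}\}$ coincides with the cylinder $C_n(x) = [x_0 x_1 \cdots x_{n-1}]$. Consequently, for every $r \in (e^{-(n+1)}, e^{-n}]$ we have $\mu(B(x,r)) = \mu(C_n(x))$, while $-\log r \in [n, n+1)$. Therefore $\frac{\log\mu(B(x,r))}{\log r}$ is sandwiched between $\frac{\log\mu(C_n(x))}{-n}$ and $\frac{\log\mu(C_n(x))}{-(n+1)}$, whose ratio tends to $1$. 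In particular, the limit defining $d_\mu(x)$ exists iff the sequence $-\tfrac{1}{n}\log \mu(C_n(x))$ converges, and in that case the two limits coincide.

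The second step is to invoke the Shannon--McMillan--Breiman theorem applied to the generating partition into $1$-cylinders $\{[i] : 0 \leq i \leq d-1\}$. Since $\mu$ is ergodic and $\sigma$-invariant and this partition generates the Borel $\sigma$-algebra, SMB gives
\[
\lim_{n \to \infty} -\frac{1}{n}\log \mu(C_n(x)) = h(\mu)
\]
for $\mu$-a.e. $x$. Combining this with the previous step yields $d_\mu(x) = h(\mu)$ on a set of full $\mu$-measure.

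Finally, applying Proposition \ref{PropHdimMeas} with the constant value $d = h(\mu)$ concludes that $\dim_H \mu = h(\mu)$. There is essentially no obstacle in this argument: the only routine verification is the interpolation between arbitrary radii $r$ and the discrete radii $e^{-n}$, which is harmless because $\log r$ and $-n$ differ by at most one. All of the substantive content is packaged into the Shannon--McMillan--Breiman theorem and into the fact, visible directly from the definition of the symbolic metric, that balls are cylinders.
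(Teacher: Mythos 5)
Your proposal is correct and follows exactly the route the paper indicates: identify balls with cylinders under the symbolic metric, apply Shannon--McMillan--Breiman to get the pointwise dimension $\mu$-a.e., and conclude via Proposition \ref{PropHdimMeas}. The paper states this as an immediate corollary without writing out the details, and your write-up supplies precisely those details.
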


\begin{prop}
    Let $(X,\sigma)$ be a subshift of finite type with adjacency matrix $A$. Then, $\dim_HX=h_{top}(\sigma)=\log\lambda$, where $\lambda$ is as in Theorem \ref{PFT}.
\end{prop}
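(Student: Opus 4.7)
The plan is to sandwich both $\dim_HX$ and $h_{top}(\sigma)$ between $\log\lambda$ and $\log\lambda$, using cylinder covers for the upper bounds and the Parry measure for the lower bounds. The key quantitative input comes from Perron--Frobenius (Theorem \ref{PFT}): the number of admissible words satisfies $|\LL_n(X)|=\sum_{i,j}A^{n-1}(i,j)=c\lambda^n(1+o(1))$ for some $c>0$. Crucially, each cylinder $[w]$ with $w\in\LL_n(X)$ is simultaneously an open set of diameter $e^{-n}$ in the metric \eqref{SymbolicMetric} and the set $X({\bf U})$ for the length-$n$ string coming from $w$ in the C-structure for topological entropy, so both C-structures will receive the same counting input.

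For the upper bounds I would first cover $X$ by all cylinders of length $n$ and estimate
\[
\sum_{w\in\LL_n(X)}(\diam[w])^\alpha=|\LL_n(X)|\,e^{-n\alpha}\leq C\lambda^n e^{-n\alpha},
\]
which tends to $0$ for every $\alpha>\log\lambda$, giving $\dim_HX\leq\log\lambda$. For topological entropy, taking the base cover $\UU=\{[0],\ldots,[d-1]\}$, non-empty strings of length $m$ correspond bijectively to words in $\LL_m(X)$ and carry $\eta({\bf U})=e^{-m}$, so the identical computation yields $h_{\UU}(\sigma|X)\leq\log\lambda$. Replacing $\UU$ by the refinement $\UU_N$ of length-$N$ cylinders shifts the cylinder-length index by $N$ but preserves the asymptotic growth rate, so $h_{\UU_N}(\sigma|X)\leq\log\lambda$ for every $N$, and the limit as $\diam(\UU)\to 0$ is still $\leq\log\lambda$.

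For the lower bounds I would introduce the \emph{Parry measure}: set $P(i,j):=A(i,j){\bf v}(j)/(\lambda{\bf v}(i))$ and ${\bf p}(i):={\bf u}(i){\bf v}(i)$, where ${\bf u},{\bf v}$ are the Perron eigenvectors from Theorem \ref{PFT}. Using $\sum_i{\bf u}(i)A(i,j)=\lambda{\bf u}(j)$ and $\sum_jA(i,j){\bf v}(j)=\lambda{\bf v}(i)$, one checks that $P$ is stochastic, compatible with $X$, aperiodic, and that ${\bf p}^TP={\bf p}$; hence $\mu_P$ is a well-defined ergodic Markov measure. Plugging into formula \eqref{entropyMarkov}, the contribution $\sum_i{\bf p}(i)\sum_jP(i,j)[\log{\bf v}(i)-\log{\bf v}(j)]$ telescopes via the stationarity identity $\sum_i{\bf p}(i)P(i,j)={\bf p}(j)$, and what remains is $h(\mu_P)=\log\lambda$. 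Proposition \ref{dim=entropy} then gives $\dim_H\mu_P=\log\lambda$, and since $\mu_P(X)=1$ the definition of $\dim_H\mu$ forces $\dim_HX\geq\log\lambda$. For the entropy lower bound, the construction yields $\mu_P([w])\asymp\lambda^{-|w|}$ uniformly, so for any cover of $X$ by cylinders $\{[w_i]\}$ one has $1=\mu_P(X)\leq C\sum_i\lambda^{-|w_i|}$; this shows that $m_C(X,\log\lambda)>0$ in the entropy C-structure and hence $h_{top}(\sigma)\geq\log\lambda$.

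The main obstacle I anticipate is the cancellation establishing $h(\mu_P)=\log\lambda$: this is where the specific form $P(i,j)=A(i,j){\bf v}(j)/(\lambda{\bf v}(i))$ is essential, since only this choice makes the $\log{\bf v}$ contributions telescope against one another via stationarity, and it is the only place where Perron--Frobenius enters the argument in a non-book-keeping way. A secondary technical point is verifying that passage to the limit $\diam(\UU)\to 0$ in the definition of $h_{top}$ does not decrease the bound below $\log\lambda$; but because the length-$N$ cylinder covers are a cofinal family of open covers with $\diam\to 0$, and refining from $\UU_N$ to $\UU_{N+1}$ changes the word-count asymptotics only by a bounded factor, this step reduces to monotonicity and the estimate above.
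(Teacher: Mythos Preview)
The paper does not actually supply a proof of this proposition: it simply cites \cite[Theorem A.2.9]{PesinDim}. Your argument is correct and is essentially the standard one --- upper bounds from the growth rate $|\LL_n(X)|\asymp\lambda^n$ obtained from Perron--Frobenius, lower bounds from the Parry measure via $h(\mu_P)=\log\lambda$ together with Proposition \ref{dim=entropy} and the uniform Gibbs estimate $\mu_P([w])\asymp\lambda^{-|w|}$. The telescoping computation you flag is exactly right: with $P(i,j)=A(i,j){\bf v}(j)/(\lambda{\bf v}(i))$ one gets $-\log P(i,j)=\log\lambda+\log{\bf v}(i)-\log{\bf v}(j)$ on admissible pairs, and the last two terms cancel after summing against ${\bf p}(i)P(i,j)$ by stationarity. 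Your handling of the passage $\diam\UU\to 0$ is also fine: since $\diam\UU_N\leq e^{-N}\to 0$ and $h_{\UU_N}(\sigma|X)=\log\lambda$ for every $N$, the existence of the limit (which is what Theorem 11.1 of \cite{PesinDim} guarantees) forces $h_{top}(\sigma)=\log\lambda$. One small remark: for the entropy lower bound you could alternatively invoke the variational principle directly, $h_{top}(\sigma)\geq h(\mu_P)=\log\lambda$, which avoids the Gibbs--cover argument; your route is equally valid and has the virtue of staying entirely within the Carath\'eodory framework.
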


The well known Variational Principle states that
$$h_{top}(\sigma)=\sup\{h(\mu):\mu\in\MM_{\sigma}(X)\},$$

where any measure that attains the supremum (if any) is called a \textit{measure of maximal entropy of $(X,\sigma)$}. Parry \cite{parry} showed that if $(X,\sigma)$ is a topologically mixing subshift of finite type, then there is an aperiodic stochastic matrix $P$ such that $m:=\mu_P$ is the unique measure of maximal entropy for $(X,\sigma)$. Such measure is called \textit{the Parry measure}. The matrix $P$ and its left eigenvector ${\bf p}$ of the Parry measure are explicitly given by
\begin{equation}\label{Parryentries}
    P(i,j)=A(i,j)\frac{{\bf v}(j)}{\lambda {\bf v}(i)},\qquad {\bf p}(i)={\bf u}(i){\bf v}(i),
\end{equation}
where $\lambda$, ${\bf u}$, ${\bf v}$ are as in Theorem \ref{PFT}.

The Parry measure also has the following characteristic, which can be shown using $\eqref{Parryentries}$: for any $i,j\in\{0,\ldots d-1\}$ and $w\in\LL_{n-1}(X)$ such that $iwj\in\LL_{n+1}(X)$ it holds
\begin{equation}\label{parryofcylinder}
    m([iwj])=\frac{{\bf u}(i){\bf v}(j)}{\lambda^n}.
\end{equation}

\subsection{Equilibrium states of some locally constant potentials}

In this section we will discuss a generalization of the Parry measure, which are equilibrium states for potentials that depend on the first two symbols. Let $\phi\colon X\to\R$ be a function such that $\phi(x)=\phi(y)$ whenever $x_0x_1=y_0y_1$. We will denote by $\phi(a,b)$ the value of $\phi|_{[ab]}$ for $ab\in\LL_2(X)$. It is known that there is a unique equilibrium state $\mu_{\phi}$ for $\phi$, which is a Markov measure determined by the aperiodic matrix $D_{\phi}$ defined by
\begin{equation}\label{DefDphi}
    D_{\phi}(a,b) ={\left\{\begin{array}{lcc}
        e^{\phi(a,b)} & , & ab\in\LL_2(X)\\
        0 & , & \text{otherwise}
    \end{array}\right.}.
\end{equation}

By Perron Frobenius Theorem, there are unique left and right maximal eigenvectors ${\bf u}_{\phi}$ and ${\bf v}_{\phi}$ associated to the maximal eigenvalue $\rho_{\phi}$, which can be chosen such that ${\bf u}_{\phi}^T{\bf v}_{\phi}=1$. The stochastic matrix $\Phi$ associated to Markov measure $\mu_{\phi}$ is related to $D_{\phi}$ by
\begin{equation}
    \Phi(a,b)=D_{\phi}(a,b)\frac{{\bf v}_{\phi}(b)}{\rho_{\phi}{\bf v}_{\phi}(a)}.
\end{equation}

Observe that this is the same relation we had for the Parry measure, where $\phi\equiv 0$ and hence $D_{\phi}=A$.

We obtain the following formula for the measure of a cylinder in terms of the matrix $D_{\phi}$ and the vectors ${\bf u}_{\phi},{\bf v}_{\phi}$. Given a word ${\bf i}=i_0\cdots i_{n-1}\in\LL_n(X)$, we have
\begin{equation}\label{measurecylinder}
    \mu_{\phi}([{\bf i}])=\frac{1}{\rho_{\phi}^{n-1}}{\bf u}_{\phi}(i_0)\left(\prod_{j=0}^{n-2}D_{\phi}(i_j,i_{j+1})\right){\bf v}_{\phi}(i_{n-1}).
\end{equation}

One can check that $\mu_{\phi}$ is the equilibrium state of $\phi$ by verifying that it satisfies the Gibbs property with $P(\phi)=\log\rho_{\phi}$.

\section{Proof of the main theorem}
The idea of the proof is to find a sequence of ergodic measures such that their entropies (resp. Hausdorff dimensions) approximate the maximal entropy (resp. Hausdorff dimension). Thus, the union of some of their regular sets will approximate the desired quantities. Following the ideas of \cite{PesinPitskel}, we will map these regular sets onto irregular sets without distorting the maximal entropy and dimension. By Proposition \ref{dim=entropy}, we can refer to only entropy of the measures, as the Hausdorff dimension coincide.

\subsection{Constructing a map that sends regular points to irregular points.}

We will consider two words $\xi,\eta$ of the form $\xi=wsw$ and $\eta=wtw$, where $\abs{w}>\abs{s}=\abs{t}$, satisfying that the only possible overlapping between these words can occur in the word $w$. Let $\mu$ be an ergodic measure such that $\mu([\xi])\neq\mu([\eta])$. Consider the continuous functions $\varphi,\psi\colon X\to\R$ given by $\varphi(x):=\chi_{[\xi]}(x)$ and $\psi(x):=\chi_{[\eta]}(x)$ and denote by $S_n\varphi,S_n\psi$ their respective $n$th Birkhoff sums. Let 
\begin{equation}\label{Bmu}
    B_{\mu}:=\left\{x\in X:\lim_{n\to\infty}\frac{1}{n}S_n\varphi(x)=\mu([\xi])\quad\text{and}\quad\lim_{n\to\infty}\frac{1}{n}S_n\psi(x)=\mu([\eta])\right\},
\end{equation}
and observe that $\mu(B_{\mu})=1$ by Birkhoff Ergodic Theorem. We now proceed to construct our words $\xi,\eta$.

Consider a periodic point $(\beta_1\beta_2\cdots\beta_{\kappa})^{\infty}$ of minimal period $\kappa$, so that $\beta_1\to\ldots\to\beta_{\kappa}\to\beta_1$ is a minimal cycle in the graph representing $X$. Since $(X,\sigma)$ is topologically mixing, there is a symbol $\alpha$ that does not belong to the cycle that can be followed by one of the $\beta_i$'s, say $\beta_1$ (otherwise relabel the $\beta_i$'s).

For $p\in\N$, let $w_p:=\alpha(\beta_1\cdots\beta_{\kappa})^p\in\LL_{\kappa p+1}(X)$. By topological mixing, we can choose two distinct words $s,t$ of the same length (say length $m$) that can connect $\beta_{\kappa}$ and $\alpha$. That is, $\beta_{\kappa}s\alpha,\beta_{\kappa}t\alpha\in\LL_{m+2}(X)$. Observe that we can increase $p$ such that $\abs{w_p}>m$. Also, by construction, no prefix of the word $w$ is also a suffix of $w$, so we conclude that the only possible overlapping between $\xi$ and $\eta$ are on their first and last $w$ pieces. Thus, the words $\xi,\eta$ satisfy what we need. The constructions of the words $s,t$ and the measure $\mu$ will be made precise later. For this part of the proof, these are the only properties we need. 

Let $M$ be the length of the words $\xi$ and $\eta$. Consider the map $L\colon X\to X$ defined as follows:

\begin{itemize}
    \item If $(2k)!\leq i\leq (2k+1)!$ for some $k\in\N$, set $L(x)_i:=x_i$.
    \item If $(2k-1)!+1\leq i\leq (2k)!-1-M$ for some $k\in\N$, set
    
    \begin{equation}
        L(x)\Big|_i^{i+M-1}:= \left\{\begin{array}{lcl}
                \eta & \text{if} & x|_i^{i+M-1}=\xi\\
                \\
                \xi & \text{if} & x|_i^{i+M-1}=\eta\\
                \\
                x|_i^{i+M-1} & \text{if} & x|_i^{i+M-1}\neq\xi,\eta\\
        \end{array}\right..
    \end{equation}
\end{itemize}

The properties of the words $\xi,\eta$ ensure that there are no ambiguities when defining the map $L$.

\begin{prop}\label{Lirreg}
    The map $L$ satisfies $L(B_{\mu})\subset\hat{X}(\varphi)$.
\end{prop}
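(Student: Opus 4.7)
The plan is to show that for any $x\in B_\mu$ the Birkhoff averages $\frac{1}{n}S_n\varphi(L(x))$ fail to converge by exhibiting two subsequences of times along which they tend to two distinct limits, namely $\mu([\xi])$ and $\mu([\eta])$. The natural choices, dictated by the block structure of $L$, are the right endpoints of an ``even'' block --- say $n=(2k+1)!$, over which $L$ has been the identity --- and of an ``odd'' block --- say $n=(2k)!-M$, over which $L$ has been swapping the patterns $\xi\leftrightarrow\eta$.

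The first step is to set up a dictionary between occurrences of $\xi$ in $y:=L(x)$ and occurrences of $\xi$ or $\eta$ in $x$. On the bulk of an even block, i.e.\ for $(2k)!\leq j\leq (2k+1)!-M$, the map $L$ acts as the identity on positions $j,\ldots,j+M-1$, so $y|_j^{j+M-1}=x|_j^{j+M-1}$ and hence $\chi_{[\xi]}(\sigma^j y)=\varphi(\sigma^j x)$. On the bulk of an odd block, i.e.\ for $(2k-1)!+1\leq j\leq (2k)!-1-M$, the swap rule combined with the non-overlap property of $\xi,\eta$ (which prevents the window starting at $j$ from being disturbed by a swap triggered at a nearby position) yields $\chi_{[\xi]}(\sigma^j y)=\psi(\sigma^j x)$. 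Near the boundaries between consecutive blocks these identifications may fail for at most $O(M)$ values of $j$ per transition, a negligible contribution to the partial sums.

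Using this dictionary, I would then estimate $S_n\varphi(y)$ along each subsequence. For $n=(2k+1)!$, writing
\[
    S_n\varphi(y)=S_{(2k)!}\varphi(y)+\sum_{j=(2k)!}^{n-1}\chi_{[\xi]}(\sigma^j y),
\]
the first term is bounded by $(2k)!=o(n)$, and the second equals $S_n\varphi(x)-S_{(2k)!}\varphi(x)+O(M)$, which by Birkhoff's theorem applied to $x\in B_\mu$ is $n\mu([\xi])+o(n)$. Dividing by $n$ and letting $k\to\infty$ yields the limit $\mu([\xi])$. For $n=(2k)!-M$, an analogous split at time $(2k-1)!+1$ produces a dominant contribution $\sum_j\psi(\sigma^j x)=n\mu([\eta])+o(n)$, giving the limit $\mu([\eta])$. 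Since $\mu([\xi])\neq\mu([\eta])$ by hypothesis, the Birkhoff average of $\varphi$ at $L(x)$ does not exist, and therefore $L(x)\in\hat X(\varphi)$.

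The main obstacle will be verifying the swap dictionary on the odd blocks: one must ensure that an occurrence of $\xi$ (or $\eta$) starting at position $j$ in $x$ is transformed cleanly into an occurrence of $\eta$ (or $\xi$) starting at the same position in $L(x)$, without interference from swaps triggered at neighboring positions. This is exactly where the construction of $\xi=wsw$ and $\eta=wtw$ with $|w|>|s|=|t|$ and no non-trivial self-overlap of $w$ pays off; once this is in place, controlling cumulative $O(M)$ boundary errors against the factorial block lengths is routine.
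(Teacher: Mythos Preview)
Your proposal is correct and follows essentially the same route as the paper: exhibit two subsequences of times, $(2k+1)!$ and (roughly) $(2k)!$, along which the Birkhoff averages of $\varphi$ at $L(x)$ converge to $\mu([\xi])$ and $\mu([\eta])$ respectively, using that $L$ acts as the identity on even blocks and as the swap $\xi\leftrightarrow\eta$ on odd blocks, with the factorial growth absorbing the $O(M)$ boundary errors and the negligible prior block. The paper packages the estimate slightly differently---comparing $S_n\varphi(L(x))$ directly to $S_n\varphi(x)$ (resp.\ $S_n\psi(x)$) and bounding the number of indices where they differ by $(2n)!+M$---but this is only a cosmetic variation of your splitting argument.
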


\begin{proof}
    Fix $x\in B_{\mu}$. Observe that if $(2n-1)!+1\leq i \leq (2n)!-1-M$ for big enough $n\in\N$, then
    \begin{equation}
        \varphi(\sigma^i(L(x)))=\left\{\begin{array}{ccl}
        1 & \text{if} & L(x)\Big|_{i}^{i+M-1}=\xi \\
        \\
        0 & \text{if} & L(x)\Big|_{i}^{i+M-1}\neq\xi 
    \end{array}\right.=\left\{\begin{array}{ccl}
        1 & \text{if} & x\Big|_{i}^{i+M-1}=\eta \\
        \\
        0 & \text{if} & x\Big|_{i}^{i+M-1}\neq\eta 
    \end{array}\right.=\psi(\sigma^i(x)).
    \end{equation}

    Hence,
    \begin{align*}
        \Big|\frac{1}{(2n+1)!}\sum_{i=0}^{(2n+1)!}\varphi(\sigma^i(L(x)))-\mu([\xi])\Big|&\leq\Big|\frac{1}{(2n+1)!}\sum_{i=0}^{(2n+1)!}\varphi(\sigma^i(L(x)))-\frac{1}{(2n+1)!}\sum_{i=0}^{(2n+1)!}\varphi(\sigma^i(x))\Big|\\ &+\Big|\frac{1}{(2n+1)!}\sum_{i=0}^{(2n+1)!}\varphi(\sigma^i(x))-\mu([\xi])\Big|\\
        &\leq\frac{(2n)!+M}{(2n+1)!}+\Big|\frac{1}{(2n+1)!}\sum_{i=0}^{(2n+1)!}\varphi(\sigma^i(x))-\mu([\xi])\Big|\longrightarrow 0
    \end{align*}
    as $n\to\infty$. Similarly,
    \begin{align*}
        \Big|\frac{1}{(2n)!}\sum_{i=1}^{(2n)!-1}\varphi(\sigma^i(L(x)))-\mu([\eta])\Big|&\leq\Big|\frac{1}{(2n)!}\sum_{i=1}^{(2n)!-1}\varphi(\sigma^i(L(x)))-\frac{1}{(2n)!}\sum_{i=1}^{(2n)!-1}\psi(\sigma^i(x))\Big|\\ &+\Big|\frac{1}{(2n)!}\sum_{i=1}^{(2n)!-1}\psi(\sigma^i(x))-\mu([\eta])\Big|\\
        &\leq\frac{(2n-1)!+M}{(2n)!}+\Big|\frac{1}{(2n)!}\sum_{i=1}^{(2n)!-1}\psi(\sigma^i(x))-\mu([\eta])\Big|\longrightarrow 0
    \end{align*}
    as $n\to\infty$. Since $\mu([\xi])\neq\mu([\eta])$, we have $L(x)\in\hat{X}(\varphi)$. The desired result follows as $x\in B_{\mu}$ was arbitrary.\qedhere
\end{proof}

Observe that $L=L^{-1}$, so in order to show that $L$ preserves Hausdorff dimension (using Proposition \ref{propDimH}) it suffices to show it is Lipschitz (hence automatically bi-Lipschitz).

\begin{prop}\label{LLipschitz}
    The map $L\colon X\to X$ is Lipschitz.
\end{prop}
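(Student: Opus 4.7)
The plan is to exploit the fact that $L$ acts locally: the value of $L(x)$ at coordinate $j$ depends only on the index $j$ (which dictates which branch of the definition applies) together with the values of $x$ on a window of size at most $2M-1$ centered near $j$. Since the metric $d(x,y)=e^{-n}$ simply measures the first coordinate of disagreement, controlling this radius of dependence will immediately give a Lipschitz bound of the form $e^{M-1}$.

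First I would make the locality claim precise. Inspecting the definition, the swap rule can alter the coordinate $j$ of $L(x)$ only via starting indices $i$ in the allowed odd-factorial range with $j \in [i,i+M-1]$, hence $i \in [j-M+1,j]$, so only entries $x_\ell$ with $\ell \in [\max(0,j-M+1),\,j+M-1]$ are consulted. The well-posedness of $L$ on overlapping occurrences of $\xi$ and $\eta$ was already established using the $wsw$/$wtw$ structure, so the $M$-window window is truly all the data needed.

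Next, suppose $x\neq y$ and $d(x,y)=e^{-n}$, so that $x_i=y_i$ for all $i\le n-1$. For every $j$ with $j+M-1\le n-1$, i.e.\ $j\le n-M$, the relevant $x$- and $y$-windows coincide, and the branch selection and swap rule agree, giving $L(x)_j=L(y)_j$. Therefore the first coordinate of disagreement of $L(x)$ and $L(y)$ is at least $n-M+1$, whence
\begin{equation*}
d(L(x),L(y))\;\le\;e^{-(n-M+1)}\;=\;e^{M-1}\,d(x,y).
\end{equation*}
The case $n\le M-1$ is vacuous since then $e^{M-1}d(x,y)\ge 1\ge d(L(x),L(y))$. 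So $L$ is Lipschitz with constant $e^{M-1}$, and since $L=L^{-1}$ it is bi-Lipschitz, which (by Proposition \ref{propDimH}) is exactly what is needed to conclude $\dim_H L(Z)=\dim_H Z$ for every $Z\subset X$.

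The only subtle step is the locality claim, and the main thing to verify carefully is that no coordinate $j$ simultaneously lies in two distinct "case" regions (the identity region $(2k)!\le i\le (2k+1)!$ and the swap region $(2k-1)!+1\le i\le (2k)!-1-M$) in a way that would make $L(x)_j$ depend on more of $x$ than claimed; since the case selection is a function of $j$ alone and the factorial intervals are (up to harmless boundary indices) disjoint, this does not occur. Everything else is a routine index calculation.
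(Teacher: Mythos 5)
Your proof is correct and takes essentially the same approach as the paper: both arguments reduce to the observation that $L(x)_j$ is determined by $j$ together with the coordinates of $x$ in a window of radius $M$ around $j$, so that agreement of $x$ and $y$ up to index $n-1$ forces agreement of $L(x)$ and $L(y)$ up to roughly index $n-M$, yielding a Lipschitz constant of order $e^{M}$. Your packaging via an explicit locality lemma is a slightly cleaner route to the same bound than the paper's case analysis on where the first disagreement index falls.
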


\begin{proof}
    Let $x,y\in X$ be such that $d(x,y)=e^{-i}$. That is, $x_i\neq y_i$ and $x_j=y_j$ for every $0\leq j\leq i-1$. 

    \textit{Case 1: Suppose $(2k)!\leq i\leq(2k+1)!$ for some $k$.}

    In this interval the map $L$ makes no substitutions, and on previous indices the same substitutions were made for $x$ and $y$ so we obtain $d(L(x),L(y))=e^{-i}=d(x,y)$.\\

    \textit{Case 2: Suppose $(2k-1)!+1\leq i\leq(2k)!-1$ for some $k$.}

    If the interval is not long enough to admit the words $\xi,\eta$ inside, then $L$ makes no substitutions and we get the same conclusion as in Case 1. If this interval is long enough to admit words $\xi,\eta$ inside, but neither $x_i$ nor $y_i$ belongs to one of these words, then no substitutions are made and we obtain again the same conclusion as in Case 1.

    Assume that $x_i$ is inside the word $\xi$ (for the word $\eta$ the argument is analogous). Denote by $\Delta$ the interval of indices such that $i\in\Delta$ and $x|_{\Delta}=\xi$. We claim that $L(x)|_{\Delta}\neq L(y)|_{\Delta}$. If the interval $\Delta$ starts at an index less than $(2k-1)!+1$ or greater than $(2k)!-1-M$, then no substitutions are made to this interval on both $x$ and $y$, and hence $L(x)|_{\Delta}=x|_{\Delta}\neq y|_{\Delta}=L(y)|_{\Delta}$ (as $x_i\neq y_i$). If the interval $\Delta$ is fully contained in $\{(2k-1)!+1,\ldots(2k)!-1\}$ and $L(y)|_{\Delta}=L(x)|_{\Delta}=\eta$, then $y|_{\Delta}=\xi=x|_{\Delta}$, which contradicts $x_i\neq y_i$. Since for indices less than the beginning of $\Delta$ the same substitutions are made for $x$ and $y$, the sequences $L(x),L(y)$ must agree up to at least index $i-M$. Thus, we obtain
    $$d(L(x),L(y))\leq e^{-(i-M)}=e^Md(x,y)$$
    as desired.\qedhere
\end{proof}

\subsection{Contructing a sequence of measures approximating the maximal dimension and maximal entropy}

In this section we will construct a sequence of Markov measures $\mu_k$ whose Hausdorff dimensions (equivalently, entropies) converge to the Hausdorff dimension (equivalently, entropy) of the Parry measure. Since we need that our measures satisfy $\mu([\xi])\neq\mu([\eta])$, we need to choose carefully our words $s,t$.

First, consider a path of minimal length that connects the symbols $\beta_{\kappa}$ and $\alpha$, say $\beta_{\kappa}\gamma_1\gamma_2\cdots\gamma_{\ell}\alpha$, and observe that $\gamma_i\neq\alpha$ for all $i$ by minimality. By topological mixing, there is $n_0$ such that for any $n\geq n_0$ there is a word connecting the symbol $\alpha$ with itself. Let $e\in\N$ be the minimum number with $n:=\kappa e> n_0$ and consider an admissible path $\alpha\theta_1\theta_2\cdots\theta_{n-1}\alpha$. Now define the words $s:=(\beta_1\cdots\beta_{\kappa})^e\gamma_1\gamma_2\cdots\gamma_{\ell}$ and $t:=\gamma_1\gamma_2\cdots\gamma_{\ell}\alpha\theta_1\theta_2\cdots\theta_{n-1}$. Observe that $\abs{s}=\kappa e+\ell=\ell+n=\abs{t}$. Recall that now we can increase the length of our word $w$ ($w=w_p$ by choosing $p$ large) so that $\abs{w}>\abs{s}=\abs{t}$. We have
\begin{align*}
\xi&=\overbrace{\alpha(\beta_1\cdots\beta_{\kappa})^p}^{w}\overbrace{(\beta_1\cdots\beta_{\kappa})^e\gamma_1\gamma_2\cdots\gamma_{\ell}}^{s}\overbrace{\alpha(\beta_1\cdots\beta_{\kappa})^p}^{w},\\
\eta&=\overbrace{\alpha(\beta_1\cdots\beta_{\kappa})^p}^{w}\overbrace{\gamma_1\gamma_2\cdots\gamma_{\ell}\alpha\theta_1\theta_2\cdots\theta_{n-1}}^{t}\overbrace{\alpha(\beta_1\cdots\beta_{\kappa})^p}^{w}.
\end{align*}

Now let us consider the following locally constant potential $\phi\colon X\to\R$. Define $\phi(\alpha,\theta_1):=1$ and $\phi(a,b):=0$ for every $ab\in\LL_2(X)\setminus\{\alpha\theta_1\}$. Let $\mu_{\phi}$ be the equilibrium state of this potential. Since $\xi,\eta$ share the first and last symbols, and since $s$ does not contain the symbol $\alpha$ and $t$ contains the word $\alpha\theta_1$, by the formula \eqref{measurecylinder} it is not hard to see that $\mu_{\phi}([\xi])<\mu_{\phi}([\eta])$.

For $q>0$, let $\mu_q$ be the equilibrium state of the locally constant potential 
$$q\phi(x):=\left\{\begin{array}{lcc}
   q  & , & x_0x_1=\alpha\theta_1 \\
   0  & , & x_0x_1\neq\alpha\theta_1
\end{array}\right..$$

A similar argument as before shows that $\mu_q([\xi])<\mu_q([\eta])$ for every $q>0$. Since the $\mu_q$'s are Markov measures, and the stochastic matrices $\Phi_q$ and their corresponding probability vectors vary continuously in $q$, we have that $\disp\lim_{q\to 0^+}h(\mu_q)=h(m)=\log\lambda$. 

\subsection{Constructing a set $N$ with properties (1) and (2) in Theorem \ref{mainthm}} 

Consider a decreasing sequence $\PP=\{q_k:k\geq 1\}$ of positive numbers in $(0,1)$ converging to $0$. Let $\mu_k$ be the equilibrium state of $q_k\phi$.


For every $k\geq 1$ let $B_k:=B_{\mu_k}$ be as in equation \eqref{Bmu} (using the measure $\mu_k$ instead of $\mu$), and define $N:=\bigcup_{k\geq 1}L(B_k)$. By Proposition \ref{Lirreg}, for each $k\in\N$ we have $L(B_k)\subset\hat{X}(\varphi)$, so $N\subset\hat{X}(\varphi)$.

\begin{lem}\label{dense}
    The set $N$ is dense in $X$.
\end{lem}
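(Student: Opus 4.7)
The plan is to establish that even a single image $L(B_k)$ is already dense in $X$, which immediately yields density of $N=\bigcup_{k\geq 1}L(B_k)$. The argument has two ingredients: density of $B_k$ in $X$, and the fact that $L$ carries dense sets to dense sets.

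For density of $B_k$, I would use that $\mu_k$ is a Markov measure of full support. Since $D_{q_k\phi}$ has the same support pattern as the aperiodic adjacency matrix $A$, it is itself aperiodic, and by Perron-Frobenius its left and right eigenvectors $\mathbf{u}_{q_k\phi}$ and $\mathbf{v}_{q_k\phi}$ have strictly positive entries. Reading the cylinder formula \eqref{measurecylinder}, every factor on the right-hand side is strictly positive for any admissible word, so $\mu_k([v])>0$ for every non-empty cylinder $[v]$. Since $\mu_k(B_k)=1$, the set $B_k$ meets every non-empty cylinder, and hence is dense in $X$.

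For the propagation step, I would verify that $L$ is a homeomorphism of $X$. Proposition \ref{LLipschitz} gives that $L$ is Lipschitz, while the identity $L=L^{-1}$ (guaranteed by the overlap-free construction of $\xi,\eta$, which makes the swap an involution without ambiguity) forces $L^{-1}$ to be Lipschitz as well. Homeomorphisms carry dense sets to dense sets, so $L(B_k)$ is dense in $L(X)=X$, and therefore $N\supseteq L(B_k)$ is dense.

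I do not anticipate a serious obstacle here; the only point that requires a moment of verification is the strict positivity of $\mu_k$ on cylinders, but this is essentially built into the construction in the previous subsection. The lemma is really just reading off the framework already in place.
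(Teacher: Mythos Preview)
Your proof is correct and follows the same overall skeleton as the paper: show that the relevant $B_k$'s are dense by a full-support argument for $\mu_k$, then push density through the homeomorphism $L$. The one genuine difference is in how full support of $\mu_k$ is obtained. The paper argues indirectly: the Parry measure $m$ is positive on every cylinder by \eqref{parryofcylinder}, and since the stochastic matrices $\Phi_{q_k}$ and their stationary vectors vary continuously in $q_k\to 0$, for any fixed cylinder $C$ one eventually has $\mu_{k_0}(C)>0$; this only gives density of the union $\bigcup_k B_k$. You instead observe directly that $D_{q_k\phi}$ has the same zero pattern as $A$, apply Perron--Frobenius to get strictly positive eigenvectors, and read off $\mu_k([v])>0$ from \eqref{measurecylinder}. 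Your route is more elementary and yields the stronger conclusion that each individual $B_k$ (hence each $L(B_k)$) is already dense, without invoking any limiting argument.
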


\begin{proof}
    Since the vectors ${\bf u},{\bf v}$ in Theorem \ref{PFT} have positive entries, by \eqref{parryofcylinder} the Parry measure $m$ gives positive measure to any cylinder in $X$.

    Let $C$ be an arbitrary cylinder in $X$. By the previous observation, $m(C)>0$. By continuity of the expression \eqref{Markovmeas} depending on the matrices $\Phi_{q_k}$ and their probability vectors, there is $k_0\in\N$ such that $\mu_{k_0}(C)>0$. Thus, $B_{k_0}\cap C\neq\emptyset$ as $\mu_{k_0}(B_{k_0})=1$. We conclude that the set $\bigcup_{k\geq 1}B_k$ is dense in $X$. Since $L$ is a homeomorphism of $X$, the set $N=L(\bigcup_{k\geq 1}B_k)$ is also dense in $X$.
\end{proof}

\begin{lem}
    We have $\sigma(N)\subset N$.
\end{lem}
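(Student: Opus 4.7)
The plan is to establish the stronger inclusion $\sigma(L(B_k))\subset L(B_k)$ for every $k$, from which $\sigma(N)\subset N$ follows by taking unions. Since $L=L^{-1}$, the condition $\sigma(L(x))\in L(B_k)$ is equivalent to $L(\sigma(L(x)))\in B_k$, so it suffices to show that $z:=L(\sigma(L(x)))$ belongs to $B_k$ whenever $x\in B_k$. My strategy is to compare $z$ with $\sigma(x)$ coordinate by coordinate and prove they agree outside a set of indices of asymptotic density zero. Since $\varphi=\chi_{[\xi]}$ and $\psi=\chi_{[\eta]}$ depend on only finitely many coordinates, two sequences that agree outside a density-zero set of indices have the same Birkhoff averages of $\varphi$ and $\psi$ in the limit.

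The core is a position-by-position comparison. Denote by $I_k:=[(2k-1)!+1,(2k)!-1-M]$ the substitution intervals and by $Q_k:=[(2k)!,(2k+1)!]$ the quiet intervals. If both $i$ and $i+1$ lie in the interior of some $Q_k$, then $L$ performs no substitution at either stage and $z_i=L(x)_{i+1}=x_{i+1}=\sigma(x)_i$ directly from the definition. If both $i$ and $i+1$ lie deep inside some $I_k$ (with room for a length-$M$ substitution starting at $i+1$), a case analysis on whether $x|_{i+1}^{i+M}$ equals $\xi$, $\eta$, or neither shows that the two applications of $L$ cancel and yield $z|_i^{i+M-1}=x|_{i+1}^{i+M}=\sigma(x)|_i^{i+M-1}$; the non-overlap property of $\xi,\eta$ (the same one that makes $L$ well-defined and involutive) ensures that no nearby substitutions interfere. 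Discrepancies can therefore occur only for indices within distance $M+1$ of a boundary between consecutive intervals, and since $(2k)!$ grows super-exponentially in $k$, the total number of such indices in $[0,N]$ is $o(N)$, so the exceptional set has density zero.

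To conclude, $B_k$ is forward $\sigma$-invariant: for $x\in B_k$, $\frac{1}{n}S_n\varphi(\sigma x)-\frac{1}{n}S_n\varphi(x)=\frac{1}{n}(\varphi(\sigma^n x)-\varphi(x))\to 0$, and likewise for $\psi$, so $\sigma(x)\in B_k$. Combined with the density-zero comparison, this gives $z\in B_k$, hence $\sigma(L(x))=L(z)\in L(B_k)$, and taking unions over $k$ yields $\sigma(N)\subset N$. The main technical obstacle is the case analysis in the middle paragraph: carrying out the algebraic cancellation $L\circ\sigma\circ L=\sigma$ on the interior of each interval and confirming that the boundary discrepancies really do form a density-zero set. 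The non-overlap property of $\xi,\eta$ is what allows this cancellation to proceed unambiguously.
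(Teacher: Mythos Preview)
Your proposal is correct and follows essentially the same route as the paper. Both arguments reduce to showing $(L\circ\sigma\circ L)(x)\in B_k$ by observing that $L\circ\sigma\circ L$ agrees with the shift except at $O(M)$ positions near each factorial boundary, and that there are only $m(n)\sim\log n/\log\log n$ such boundaries in $[0,n]$; the paper compares $(L\circ\sigma\circ L)(x)$ directly with $x$ via $\xi$- and $\eta$-counts, while you route through $\sigma(x)$ and the (easy) $\sigma$-invariance of $B_k$, but this is a cosmetic difference.
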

\begin{proof}
    Let $y\in N$. Then, there is $k\in\N$ and $x\in B_k$ such that $L(x)=y$. We need to show that $\sigma(y)\in N$, so it suffices to show that $L(\sigma(y))\in B_k$. 

    Let $n\in\N$ and let $m(n)$ be the maximum natural number $m$ such that $m!\leq n-1$. Observe that $m(n)\to\infty$ as $n\to\infty$. Among the first $n$ symbols, the number of times the word $\xi$ occurs in the sequences $x$ and $(L\circ\sigma\circ L)(x)$ may differ in at most $m(n)$. Then we have
    \begin{align*}
        \Big|\frac{1}{n}\sum_{i=0}^{n-1}\varphi(\sigma^i(x))-\frac{1}{n}\sum_{i=0}^{n-1}\varphi(\sigma^i((L\circ\sigma\circ L)(x)))\Big|\leq\frac{m(n)}{n}\leq\frac{1}{(m(n)-1)!}\to 0
    \end{align*}
    as $n\to\infty$. An analogous argument shows the same for $\psi$ (using the word $\eta$), so we conclude that $(L\circ\sigma\circ L)(x)=L(\sigma(y))$ belongs to $B_k$ and thus $\sigma(y)\in L(B_k)\subset N$ as desired.\qedhere
\end{proof}

\subsection{The set $N$ has full Hausdorff dimension}

\begin{lem}
    The set $N=\bigcup_{k\in\N}L(B_k)$ has Hausdorff dimension $\dim_HX$.
\end{lem}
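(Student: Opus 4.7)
The plan is to exploit three properties already established in the paper: $L$ preserves Hausdorff dimension, Birkhoff-generic sets for ergodic measures carry the dimension of the measure, and the measures $\mu_k$ approximate the maximal dimension $\log\lambda=\dim_H X$.

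First I would use the countable stability of Hausdorff dimension (Proposition \ref{propDimH}(3)) to write
\[
\dim_H N = \sup_{k\in\N}\dim_H L(B_k).
\]
Next, I would invoke the fact that $L$ is Lipschitz (Proposition \ref{LLipschitz}) together with the identity $L=L^{-1}$ to conclude that $L$ is bi-Lipschitz. By Proposition \ref{propDimH}(4), this gives $\dim_H L(B_k) = \dim_H B_k$ for every $k$.

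The next step is to bound $\dim_H B_k$ from below by $\dim_H \mu_k$. Since $\mu_k$ is ergodic (as a Markov measure) and $B_k$ was defined in \eqref{Bmu} as a full-measure set of Birkhoff-generic points, we have $\mu_k(B_k)=1$. By the very definition of the Hausdorff dimension of a measure,
\[
\dim_H \mu_k = \inf\{\dim_H Z : \mu_k(Z)=1\} \leq \dim_H B_k.
\]
Applying Proposition \ref{dim=entropy} to the ergodic measure $\mu_k$ yields $\dim_H \mu_k = h(\mu_k)$. Assembling these inequalities gives
\[
\dim_H N \geq \sup_{k\in\N} h(\mu_k).
\]

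To finish, I would use the continuity of entropy for the family $\mu_q$ (which was established in the previous subsection via continuity of the stochastic matrices $\Phi_q$ and their stationary vectors in $q$): since $q_k\to 0^+$, we have $h(\mu_k)\to h(m)=\log\lambda$. Combined with the proposition identifying $\dim_H X$ with $\log\lambda$, this gives $\dim_H N \geq \dim_H X$. The reverse inequality is immediate from monotonicity ($N\subset X$), yielding the desired equality. There is no substantial obstacle here; the only small subtlety is remembering that $\dim_H \mu_k \le \dim_H B_k$ follows directly from the infimum definition of $\dim_H \mu_k$, so no additional regularity of $\mu_k$ or $B_k$ is needed beyond $\mu_k(B_k)=1$.
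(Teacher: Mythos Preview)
Your proposal is correct and follows essentially the same argument as the paper's proof: countable stability of Hausdorff dimension, bi-Lipschitz invariance under $L$, the inequality $\dim_H\mu_k\leq\dim_HB_k$ from $\mu_k(B_k)=1$, the identity $\dim_H\mu_k=h(\mu_k)$, and $h(\mu_k)\to\log\lambda=\dim_HX$. The paper presents exactly this chain of (in)equalities in a single displayed line.
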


\begin{proof}
    Since $L$ is bi-Lipschitz (Proposition \ref{LLipschitz}), it satisfies $\dim_HL(Z)=\dim_HZ$ for every $Z\subset X$ (see Proposition \ref{propDimH}). Since $\mu_k(B_k)=1$, by definition we have $\dim_H\mu_k\leq\dim_HB_k$. Recall that as $q_k\searrow 0$, the entropies $h(\mu_k)$ converge to $\sup_{k\geq 1}h(\mu_k)=\log\lambda$. Thus, using Proposition \ref{propDimH} and Proposition \ref{dim=entropy} we have
    \begin{align*}
    \dim_HX&\geq\dim_HN=\sup_{k\geq1}\dim_HL(B_k)=\sup_{k\geq1}\dim_HB_k\\
    &\geq\sup_{k\geq 1}\dim_H\mu_k=\sup_{k\geq 1}h(\mu_k)=\log\lambda=\dim_HX.\qedhere
\end{align*}
\end{proof}

\subsection{The set $N$ has full topological entropy}

The argument for topological entropy is almost the same as for Hausdorff dimension. The extra difficulty in this case is that the map $L$ does not necessarily preserves entropy, as it does not commute with the dynamics $\sigma$. However, we can fix this with the following result.

\begin{prop}\label{entropyineq}
For every $k\in\N$, we have $h(\sigma|L(B_k))\geq h(\mu_k)$.
\end{prop}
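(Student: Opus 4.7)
The plan is to push forward $\mu_k$ by $L$ and apply a mass-distribution argument in the Carathéodory structure defining topological entropy. I would set $\nu_k := L_*\mu_k$, a Borel probability measure concentrated on $L(B_k)$, and use the bi-Lipschitz property of $L$ (Proposition \ref{LLipschitz}, with constant $e^M$, recalling $L=L^{-1}$) to establish the inclusion $L^{-1}(C_n(y))\subset C_{n-M}(L^{-1}(y))$ for every $y\in X$ and $n>M$, where $C_n(z)$ denotes the length-$n$ cylinder around $z$. This yields the key measure estimate
\[
\nu_k(C_n(y)) = \mu_k(L^{-1}(C_n(y))) \leq \mu_k(C_{n-M}(L^{-1}(y))).
\]

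Next I would apply the Shannon--McMillan--Breiman theorem to $\mu_k$ (which is ergodic as a Markov/equilibrium measure) to obtain $-\tfrac{1}{n}\log\mu_k(C_n(x))\to h(\mu_k)$ for $\mu_k$-a.e.\ $x$, and transfer it via the above bound to get, for $\nu_k$-a.e.\ $y$,
\[
\liminf_{n\to\infty}-\tfrac{1}{n}\log\nu_k(C_n(y)) \geq h(\mu_k).
\]
Fixing $\delta>0$, Egorov's theorem then produces a Borel set $A\subset L(B_k)$ with $\nu_k(A)>0$ and an $N_0\in\N$ such that $\nu_k(C_n(y))\leq e^{-n(h(\mu_k)-\delta)}$ for every $y\in A$ and $n\geq N_0$.

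The last step is the mass-distribution argument in the Carathéodory structure from Section \ref{DefEntropy} with the one-cylinder cover $\UU_1=\{[0],\ldots,[d-1]\}$: strings $\mathbf{U}\in\S_m(\UU_1)$ give $X(\mathbf{U})=[w]$, a cylinder of length $m$, with $\eta(\mathbf{U})=e^{-m}$. For any cover $\JJ\subset\S(\UU_1)$ of $A$ with $\psi(\JJ)\leq 1/N_0$, each $m_i\geq N_0$; choosing $y_i\in X(\mathbf{U}_i)\cap A$ whenever nonempty yields $X(\mathbf{U}_i)=C_{m_i}(y_i)$ and hence $\nu_k(X(\mathbf{U}_i))\leq e^{-m_i(h(\mu_k)-\delta)}$, so
\[
\sum_i e^{-m_i(h(\mu_k)-\delta)} \geq \sum_i \nu_k(X(\mathbf{U}_i)) \geq \nu_k(A) > 0.
\]
This gives $m_C(A,h(\mu_k)-\delta)\geq\nu_k(A)>0$, whence $h_{\UU_1}(\sigma|A)\geq h(\mu_k)-\delta$. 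Combining with monotonicity under set inclusion (Proposition \ref{propDimE}) and with the non-decreasingness of $h_\UU$ under refinement of $\UU$ (so that $h(\sigma|Z)\geq h_{\UU_1}(\sigma|Z)$ via a Lebesgue-number argument on the compact space $X$), one obtains $h(\sigma|L(B_k))\geq h(\mu_k)-\delta$, and letting $\delta\to 0$ concludes the proof.

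The main obstacle is precisely what the author flags: the pushforward $\nu_k$ is not $\sigma$-invariant, since $L$ does not commute with $\sigma$, so the variational principle cannot be applied directly. The mass-distribution method circumvents this by transferring only a \emph{pointwise} cylinder-decay estimate from $\mu_k$ to $\nu_k$, which survives because $L$ is bi-Lipschitz even though its conjugation of $\sigma$ is nontrivial.
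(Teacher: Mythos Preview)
Your argument is correct, and it is genuinely different from---and considerably simpler than---the paper's own proof. The paper establishes the proposition via Lemma~\ref{SLLN}, which computes the exact limit
\[
\lim_{n\to\infty}-\tfrac{1}{n}\log\mu_k\bigl(C_{n+m}(L(x))\bigr)=h(\mu_k)
\]
for $\mu_k$-a.e.\ $x$. Note that this is the $\mu_k$-measure of a cylinder around the \emph{transformed} point $L(x)$, so the paper must analyse how the symbol statistics of $L(x)$ compare to those of $x$. This forces a split of the index set into the ``identity'' block $S_1$ and the ``swap'' block $S_2$, an appeal to exponential decay of correlations for Markov measures, and the Janisch--Korchevsky SLLN for quasi-uncorrelated variables to handle Birkhoff-type averages along the non-syndetic subsequences $S_1(n)$ and $S_2(n)$.

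Your route avoids all of this. By pushing forward to $\nu_k=L_*\mu_k$ and using only the bi-Lipschitz inclusion $L^{-1}(C_n(y))\subset C_{n-M}(L^{-1}(y))$, you reduce immediately to the ordinary Shannon--McMillan--Breiman statement for $\mu_k$ at the \emph{untransformed} point $x=L^{-1}(y)$; nothing about the internal structure of $L$ beyond its Lipschitz constant is used. The remaining Egorov/mass-distribution step is the same in spirit as the paper's endgame. What the paper gains is the exact limit rather than a $\liminf$ bound (and a template that could survive if $L$ were not bi-Lipschitz but still had controlled block structure); what you gain is a shorter, more robust argument that works for any bi-Lipschitz $L$ and does not require decay of correlations or the auxiliary SLLN.
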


In order to prove Proposition \ref{entropyineq} we need the following lemma.

\begin{lem}\label{SLLN} Fix $m\in\N$. Then for $\mu_k$ almost every $x\in B_k$, we have
$$\lim_{n\to\infty}-\frac{1}{n}\log\mu_k(C_{n+m}(L(x)))=h(\mu_k).$$
\end{lem}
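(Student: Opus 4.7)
The plan is to exploit the Markov structure of $\mu_k$ in order to rewrite $-\tfrac{1}{n}\log\mu_k(C_{n+m}(L(x)))$ as a Birkhoff-type average along the orbit of $L(x)$, and then to reduce the desired limit to a comparison with the usual Birkhoff limit along the orbit of $x$.

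First I would apply formula \eqref{measurecylinder} to the cylinder of length $n+m$ centered at $L(x)$. Writing $\rho_k$ for the maximal eigenvalue of $D_{q_k\phi}$ and recalling that $\phi=\chi_{[\alpha\theta_1]}$ depends only on the first two coordinates, this yields
\[
-\tfrac{1}{n}\log\mu_k(C_{n+m}(L(x)))=\tfrac{n+m-1}{n}\log\rho_k-\tfrac{q_k}{n}\sum_{j=0}^{n+m-2}\phi(\sigma^jL(x))+O(1/n),
\]
after absorbing the bounded boundary factors $\mathbf u_{q_k\phi}(L(x)_0),\mathbf v_{q_k\phi}(L(x)_{n+m-1})$ into the $O(1/n)$. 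Combined with the pressure identity $\log\rho_k=h(\mu_k)+q_k\mu_k([\alpha\theta_1])$ coming from the variational principle for the equilibrium state, the lemma reduces to
\[
\lim_{n\to\infty}\frac{1}{n}\sum_{j=0}^{n+m-2}\phi(\sigma^jL(x))=\mu_k([\alpha\theta_1])\qquad\text{for $\mu_k$-a.e. }x.
\]

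Second, I would compare this time-average along the orbit of $L(x)$ with the analogous sum along the orbit of $x$, which converges to $\mu_k([\alpha\theta_1])$ for $\mu_k$-a.e. $x$ by Birkhoff's ergodic theorem. A direct count of the $\alpha\theta_1$-transitions inside the explicit words $\xi$ and $\eta$ (noting that $s$ avoids $\alpha$ while $t$ contains exactly one occurrence of $\alpha\theta_1$) shows that each $\xi\leftrightarrow\eta$ substitution performed by $L$ modifies the value of $\sum\phi$ by exactly $\pm\Delta$ with $\Delta=1$. Hence
\[
\Bigl|\sum_{j=0}^{n+m-2}\phi(\sigma^jL(x))-\sum_{j=0}^{n+m-2}\phi(\sigma^jx)\Bigr|\leq\Delta\bigl|N_\xi^{\mathrm{bad}}(n+m)-N_\eta^{\mathrm{bad}}(n+m)\bigr|+O(1),
\]
where $N_\xi^{\mathrm{bad}}(N)$ and $N_\eta^{\mathrm{bad}}(N)$ count the occurrences of $\xi$ and $\eta$ in $x|_0^{N-1}$ whose starting position lies in a substitution window $[(2k-1)!+1,(2k)!-1-M]$.

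Finally I would conclude by showing that this discrepancy is $o(n)$ for $\mu_k$-a.e. $x\in B_k$: combining Birkhoff's theorem for $\chi_{[\xi]}$ and $\chi_{[\eta]}$ with the exponential decay of correlations of $\mu_k$ (as recorded in Section~3.2) applied on each factorial-scale substitution window, one controls the deviation of $N_\xi^{\mathrm{bad}}$ and $N_\eta^{\mathrm{bad}}$ from their mean values and obtains the desired estimate. The main obstacle is precisely this last step, since the substitution windows occupy a non-vanishing fraction of $[0,n]$ at scales $n\sim(2k)!$; one must therefore go beyond a naive application of Birkhoff on sub-intervals and exploit a quantitative ergodic bound (for instance a law of the iterated logarithm, or a large-deviation estimate valid for Markov measures with exponential decay of correlations) together with the specific factorial spacing of the window endpoints to ensure that the correction term is asymptotically negligible.
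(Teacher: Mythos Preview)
Your reduction via \eqref{measurecylinder} and the pressure identity $\log\rho_k=h(\mu_k)+q_k\mu_k([\alpha\theta_1])$ is correct, and it is a genuinely different route from the paper's: there the function $\rho(y)=-\log Q(y_0,y_1)$ is never unpacked, the index set $\{0,\dots,n-1\}$ is split into the ``identity'' block $S_1$ and the ``swap'' block $S_2$, and on each block a Strong Law of Large Numbers for quasi-uncorrelated variables (Janisch--Korchevsky) is invoked, the $S_2$-block being handled through the global involution $\iota$ that swaps \emph{every} $\xi$ with $\eta$.

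The obstacle you flag in your last step, however, is not a technicality you can dispatch with a quantitative ergodic bound --- it is fatal for the \emph{equality} you are aiming at. At the scale $n=(2K)!$ the substitution windows cover a $(1-O(1/K))$-fraction of $[0,n]$, so for $\mu_k$-typical $x$ one has $N_\xi^{\mathrm{bad}}(n)\sim n\,\mu_k([\xi])$ and $N_\eta^{\mathrm{bad}}(n)\sim n\,\mu_k([\eta])$. Since $\mu_k([\xi])\neq\mu_k([\eta])$ by construction, the discrepancy $N_\xi^{\mathrm{bad}}(n)-N_\eta^{\mathrm{bad}}(n)$ is of exact order $n$, not $o(n)$: the issue is not a fluctuation about the mean but that the two means differ. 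Your own reduction then shows that along $n=(2K)!$ the average $\tfrac1n\sum_j\phi(\sigma^jL(x))$ tends to $\mu_k([\alpha\theta_1])+\Delta\bigl(\mu_k([\xi])-\mu_k([\eta])\bigr)$, while along $n=(2K+1)!$ it tends to $\mu_k([\alpha\theta_1])$; so the limit in the lemma does not exist. What your computation \emph{does} deliver (because $\Delta(\mu_k([\xi])-\mu_k([\eta]))<0$) is the one-sided bound $\liminf_n-\tfrac1n\log\mu_k(C_{n+m}(L(x)))\ge h(\mu_k)$, which is precisely the inequality \eqref{ineqGOD} that the proof of Proposition~\ref{entropyineq} actually uses. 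The paper's own treatment of the $S_2$-sum hides the same phenomenon in the step where $\iota$ is treated as commuting with $\sigma$: that commutation fails whenever a copy of $\xi$ or $\eta$ sits at position $0$, and the discrepancy it introduces is exactly the non-vanishing term above.
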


\begin{proof}
    In order to reduce some notation, let $\mu=\mu_k$ and denote by $Q=\Phi_{q_k\phi}$ its stochastic matrix and by ${\bf q}$ its probability vector. Observe that we have the following Birkhoff average-like expression
    \begin{align*}
    -\frac{1}{n}\log\mu(C_n(x))&=-\frac{1}{n}\log({\bf q}(x_0)Q(x_0,x_1)Q(x_1,x_2)\cdots Q(x_{n-2},x_{n-1}))\\
    &=-\frac{1}{n}\log {\bf q}(x_0)+\frac{1}{n}\sum_{i=0}^{n-2}-\log Q(x_i,x_{i+1})\\
    &=-\frac{1}{n}\log {\bf q}(x_0)+\frac{1}{n}\sum_{i=0}^{n-2}\rho(\sigma^i(x)),
    \end{align*}
    where $\rho(x)=-\log Q(x_0,x_1)$. We will use the idea from \cite{PesinPitskel} that splits the original expression into two sums. Let $S_1$ be the subset of natural numbers $i$ such that $(2k)!\leq i\leq (2k+1)!$ for some $k\in\N$ and let $S_2:=\N\setminus S_1$. Consider $S_1(n):=\{0,1,\ldots,n-1\}\cap S_1$ and $S_2(n):=\{0,1,\ldots,n-1\}\cap S_2$. Since $m$ is fixed and negligible for $n$ large, we write
    \begin{equation*}
        \lim_{n\to\infty}-\frac{1}{n}\log\mu(C_{n+m}(L(x)))=\lim_{n\to\infty}\frac{1}{n}\left(\sum_{i\in S_1(n)}\rho(\sigma^i(L(x)))+\sum_{i\in S_2(n)}\rho(\sigma^i(L(x)))\right). 
    \end{equation*}

\textit{Claim:} We have $\disp\lim_{n\to\infty}\frac{1}{\abs{S_1(n)}}\sum_{i\in S_1(n)}\rho(\sigma^i(L(x)))=h(\mu)$.

\begin{proof}[Proof of Claim:] Since we are considering indices in the set $S_1$ where the map $L$ does not change the symbols, the limit on the left hand side equals $\disp\lim_{n\to\infty}\frac{1}{\abs{S_1(n)}}\sum_{i\in S_1(n)}\rho(\sigma^i(x))$. In order to obtain the desired limit, we need to use the following result (see \cite{J,K}), which establishes the Strong Law of Large Numbers for \textit{quasi uncorrelated} variables. It will be written in probabilistic notation.

\begin{prop}[Janisch, Korchevsky] Let $(X_n)_n$ be a sequence of non-negative random variables on a probability space and let $S_n=X_1+\cdots+X_n$. Assume they satisfy:
\begin{enumerate}
    \item[(i)] each $X_n$ has finite variance $V(X_n)$, and
    $$\sup_{n\in\N}\frac{\E(S_n)}{n}<\infty\quad\text{and}\quad\sum_{n=1}^{\infty}\frac{V(X_n)}{n^2}<\infty;$$
    \item[(ii)] there exists a constant $c>0$ such that 
    $$V(S_n)\leq c\sum_{k=1}^nV(X_k)$$
    for every $n\in\N$.
\end{enumerate}
Then, the SLLN holds, i.e.
$$\lim_{n\to\infty}\frac{S_n-\E(S_n)}{n}=0$$
almost surely.
\end{prop}

We will show that the sequence $X_n(x):=-\log Q(x_{n-2},x_{n-1})$ for $n\in S_1\cup\{0\}$ satisfies assumptions $(i)$ and $(ii)$. Denote by $S^*_n:=\sum_{k\in S_1(n)}X_k$.

First observe that since the measure $\mu$ is $\sigma$-invariant and $X_n=\rho\circ\sigma^n$, we have $\E(X_n)=\E(X_0)$ and $V(X_n)=V(X_0)$. It is clear now that $(i)$ holds. For $(ii)$, we have

\begin{align*}
V(S^*_n)&=\sum_{k\in S_1(n)}V(X_k)+2\sum_{\{i,j\in S_1(n):i<j\}}\Cov(X_i,X_j)\\
&=\abs{S_1(n)}V(X_0)+2\sum_{\{i,j\in S_1(n):i<j\}}\Cov(X_i,X_j).
\end{align*}

Observe that for $i<j$, since $\sigma$ preserves $\mu$ we have
\begin{align*}
    \Cov(X_i,X_j)&=\E(X_iX_j)-\E(X_i)\E(X_j)\\
    &=\int(\rho\circ\sigma^i)(\rho\circ\sigma^j)\dmu-\int\rho\circ\sigma^i\dmu\int\rho\circ\sigma^j\dmu\\
    &=\int\rho(\rho\circ\sigma^{j-i})\dmu-\int\rho\dmu\int\rho\dmu\\
    &\leq Ce^{-\gamma(j-i)}
\end{align*}
 for some $\gamma>0$, consequence of the exponential decay of correlations of the Markov measure $\mu$. Here $C$ depends on $\rho$, which depends only on the matrix $Q$. 

 Now write $S_1(n):=A_1\cup A_2\cup\cdots\cup A_{k_n}$, where $k_n=\max\{k\in\N:(2k)!\leq n\}$, $A_{\ell}=\{(2\ell)!,\ldots,(2\ell+1)!\}$ for $1\leq \ell\leq k_{n}-1$ and
 $$A_{k_n}=\left\{\begin{array}{lcl}
     \{(2k_n)!,\ldots,(2k_n+1)!\} & , & (2k_n+1)!\leq n\\
      \{(2k_n)!,\ldots,n\} & , & (2k_n+1)!>n 
 \end{array}\right..$$

 Observe that for $1\leq \ell\leq k_n$,
 \begin{equation}
 \sum_{\{i,j\in A_{\ell}:i<j\}}e^{-\gamma(j-i)}=\sum_{i=1}^{\abs{A_{\ell}}-1}(\abs{A_{\ell}}-i)e^{-\gamma i}\leq\abs{A_{\ell}}E,
 \end{equation}
 where $E=\sum_{n=1}^{\infty}e^{-\gamma n}$. Let $C'>0$ be a constant with $E\leq C'V(X_0)$. Then
 \begin{equation}
 \sum_{\ell=1}^{k_n}\sum_{\{i,j\in A_{\ell}:i<j\}}e^{-\gamma(j-i)}\leq\sum_{\ell=1}^{k_n}\abs{A_{\ell}}E\leq C'\abs{S_1(n)}V(X_0).
 \end{equation}
 
 For the reminding terms we have
\begin{align*}
    \sum_{\ell=1}^{k_n}\sum_{m=1}^{\ell-1}\sum_{i\in A_m}\sum_{j\in A_{\ell}}e^{-\gamma(j-i)}&\leq\sum_{\ell=1}^{k_n}\sum_{m=1}^{\ell-1}\sum_{i\in A_m}\sum_{j\in A_{\ell}}e^{-\gamma((2\ell)!-i)}\\
    &=\sum_{\ell=1}^{k_n}\abs{A_{\ell}}\underbrace{\sum_{m=1}^{\ell-1}\sum_{i\in A_m}e^{-\gamma((2\ell)!-i)}}_{\leq E}\\
    &\leq \abs{S_1(n)}E\\
    &\leq C'\abs{S_1(n)}V(X_0).
\end{align*}
Putting everything together, we obtain
\begin{align*}
    V(S_n^*)&\leq\abs{S_1(n)}V(X_0)+4CC'\abs{S_1(n)}V(X_0)\leq(1+4CC')\abs{S_1(n)}V(X_0).
\end{align*}
Thus, our sequence of random variables satisfies $(ii)$ as well, and hence we conclude the SLLN, i.e. for $\mu$-almost every $x\in B_k$ we have
\begin{align*}
    \lim_{n\to\infty}\frac{1}{\abs{S_1(n)}}\sum_{i\in S_1(n)}\rho(L(x))&=\E(X_0)=\int\rho\dmu=\int-\log Q(x_0,x_1)\dmu(x)\\
    &=\sum_{w\in\LL_2(X)}-\mu([w])\log Q(w)\\
    &=\sum_{w\in\LL_2(X)}-{\bf q}(w_0)Q(w_0,w_1)\log Q(w_0,w_1)\\
    &=h(\mu).
\end{align*}
This completes the proof of the claim. 
\end{proof}

In order to complete the proof of Lemma \ref{SLLN} let $\iota\colon X\to X$ be the involution $\xi\mapsto\eta$, $\eta\mapsto\xi$, observe it is Lipschitz (similar argument as for $L$) and commute with $\sigma$. Since $\iota$ preserves cylinders of length 2, the measure $\iota_*\mu$ has the same integral as $\mu$ with respect to $\rho$. Using an analogous argument as for our previous limit, we obtain that for $\mu$-almost every $x\in B_k$
\begin{align*}
    \lim_{n\to\infty}\frac{1}{\abs{S_2(n)}}\sum_{i\in S_2(n)}\rho(\sigma^i(L(x)))&=\lim_{n\to\infty}\frac{1}{\abs{S_2(n)}}\sum_{i\in S_2(n)}\rho(\sigma^i(\iota(x)))\\
    &=\lim_{n\to\infty}\frac{1}{\abs{S_2(n)}}\sum_{i\in S_2(n)}\rho(\iota(\sigma^i(x)))\\
    &=\int\rho\circ\iota\dmu=\int\rho\,d(\iota_*\mu)\\
    &=h(\mu).
\end{align*}

Finally since $\abs{S_1(n)}+\abs{S_2(n)}=n$, we have
\begin{align*}
    \lim_{n\to\infty}-\frac{1}{n}\log\mu(C_{n+m}(L(x)))&=\lim_{n\to\infty}\frac{1}{n}\left(\sum_{i\in S_1(n)}\rho(\sigma^i(L(x)))+\sum_{i\in S_2(n)}\rho(\sigma^i(L(x)))\right)\\
    &=\lim_{n\to\infty}\left(\frac{\abs{S_1(n)}}{n}\frac{1}{\abs{S_1(n)}}\sum_{i\in S_1(n)}\rho(\sigma^i(L(x)))\right.\\
    &+\left.\frac{\abs{S_2(n)}}{n}\frac{1}{\abs{S_2(n)}}\sum_{i\in S_2(n)}\rho(\sigma^i(L(x)))\right)\\
    &=h(\mu)
\end{align*}
as desired.\qedhere
\end{proof}

\begin{proof}[Proof of Proposition \ref{entropyineq}] We follow the line of arguments as in \cite{PesinPitskel}. We are going to use the definition of entropy and notation from subsection \ref{DefEntropy}. Let $\delta>0$ and fix $m\in\N$. Consider the open cover $\UU$ of $X$ by cylinders of length $m$. Observe that for ${\bf U}\in\S$, the set $X({\bf U})$ is a cylinder of length $m+m({\bf U})$.

By Lemma \ref{SLLN}, given $0<\delta<\min\{h(\mu_k),1/2\}$ there is $D\subset B_k$ and $N\in\N$ such that $\mu_k(D)\geq 1-\delta$ and for every $x\in D$ we have
\begin{equation}\label{ineqGOD}
    \mu(C_{n+m}(L(x)))\leq e^{-n(h(\mu_k)-\delta)}
\end{equation}
for every $n\geq N$.

Let $\alpha<h(\mu_k)-\delta$ and denote the Carath\'eodory outer measure by $H:=m_C(\cdot,\alpha)$. Let $n\geq N$ and choose $\GG\subset\S$ a collection of strings that cover $L(B_k)$ with $m({\bf U})\geq n$ for every ${\bf U}\in\GG$ such that
\begin{equation}
    \Big|\inf\left\{\sum_{{\bf V}\in\JJ}e^{-m({\bf V})\alpha}\right\}-\sum_{{\bf U}\in\GG}e^{-m({\bf U})\alpha}\Big|\leq\delta,
\end{equation}
    where the infimum is taken over all $\JJ\subset\S$ that cover $L(B_k)$ with $m({\bf V})\geq n$ for every ${\bf V}\in\JJ$. Let $\GG_r:=\{{\bf U}\in\GG:m({\bf U})=r\}$ and $E_r:=\bigcup_{{\bf U}\in\GG_r}X({\bf U})$. Since $X({\bf U}')$ and $X({\bf U}'')$ are disjoint for distinct ${\bf U}',{\bf U}''$ by \eqref{ineqGOD} we have
    
\begin{equation}
    \mu_k(L(E_r)\cap D)=\sum_{{\bf U}\in\GG_r}\mu_k(L(X({\bf U}))\cap D)\leq\abs{\GG_r}e^{-r(h(\mu_k)-\delta)}.
\end{equation}

Then,
\begin{align*}
    \sum_{{\bf U}\in\GG}e^{-m({\bf U})\alpha}&=\sum_{r=n}^{\infty}e^{-r\alpha}\abs{\GG_r}\\
    &\geq\sum_{r=n}^{\infty}\mu_k(L(E_r)\cap D)e^{r(-\alpha+h(\mu_k)-\delta)}\\
    &\geq(1-\delta)e^{n(-\alpha+h(\mu_k)-\delta)}\\
    &\geq 1-\delta
\end{align*}
as $-\alpha+h(\mu_k)-\delta>0$.

It follows that
\begin{align*}
\inf\left\{\sum_{{\bf V}\in\JJ}e^{-m({\bf V})\alpha}:\JJ\text{ covers }L(B_k),m({\bf V})\geq n\right\}\geq 1-2\delta>0.
\end{align*}
Taking limit as $n\to\infty$ we obtain $H(L(B_k))=m_C(L(B_k),\alpha)>0$. By definition, we get $h_{\UU}(\sigma|L(B_k))\geq\alpha$. Since $\alpha<h(\mu_k)-\delta$ is arbitrary, we obtain $h_{\UU}(\sigma|L(B_k))\geq h(\mu_k)-\delta$. Now since $\delta$ is arbitrarily close to 0, we get $h_{\UU}(\sigma|L(B_k))\geq h(\mu_k)$. Finally we can send the length of the cylinders in the cover $\UU$ to infinity, and observing that $\diam\UU\to 0$ as $m\to\infty$ we get the desired result.\qedhere
\end{proof}

We now proceed to show that $N=\bigcup_{k\in\N}L(B_k)$ has full topological entropy. By Proposition \ref{propDimE} and Proposition \ref{entropyineq}, we have
\begin{align*}
    h_{top}(\sigma)&\geq h(\sigma|N)=\sup_{k\in\N}h(\sigma|L(B_k))\\
    &\geq\sup_{k\in\N}h(\mu_k)=\log\lambda\\
    &=h_{top}(\sigma)
\end{align*}
as desired.\hfill\qed

\subsection{Finding an uncountable collection $\NN$ of such sets $N$.}

We are going to find specific sequences $\QQ=(q_k)_{k\geq1}$ decreasing to 0 such that the corresponding sets $N_{\QQ}$ that they produce satisfy the conclusion of the main theorem.

Let $f,g\colon[0,1]\to\R$ be the following functions:

\begin{equation}
    f(q):=\left\{\begin{array}{lcc}
        \abs{\mu_q([\xi])-m([\xi])} & , & 0<q\leq 1  \\
         0 & , & q=0
    \end{array}\right.,
\end{equation}    
\begin{equation}
    g(q):=\left\{\begin{array}{lcc}
       \abs{\mu_q([\eta])-m([\eta])}  & , & 0<q\leq 1  \\
        0 & , & q=0 
    \end{array}\right..
\end{equation}

The functions $f$ and $g$ are continuous, and at least one of them must have positive values for some $q$ arbitrarily close to $0$. If not, then there is $q_0>0$ such that $f(q)=0$ and $g(q)=0$ for every $q\leq q_0$. This implies that for $q<q_0$ we have $\mu_q([\xi])=m([\xi])=m([\eta])=\mu_q([\eta])$, which contradicts the fact that $\mu_q([\xi])\neq\mu_q([\eta])$. Assume without loss of generality that the function $f$ has values of $q$ arbitrarily close to 0 where it is positive.

Choose now a strictly decreasing sequence $\EE=(\eps_k)_{k\geq 1}\subset(0,\max f]$ converging to 0. By the intermediate value theorem and the previous observation about $f$, there is a strictly decreasing sequence $\QQ(\EE)=(q_k)_{k\geq 1}$ converging to 0 such that $f(q_k)=\eps_k$. This sequence $\QQ(\EE)$ gives us a set $N_{\QQ(\EE)}$ satisfying properties (1), (2) and (4) in Theorem \ref{mainthm}.

Let $\EE'=(\eps_k')$ be another such sequence such that $\EE\cap\EE'=\emptyset$. 

\textit{Claim:} The sets $N_{\QQ(\EE)}$ and $N_{\QQ(\EE')}$ are disjoint.

\begin{proof}
    Let $y\in N_{\QQ(\EE)}\cap N_{\QQ(\EE')}$. Then, by definition of these sets, there are $\eps_k\neq\eps_{m}'$ such that $y\in L(B^{\QQ(\EE)}_k)\cap L(B^{\QQ(\EE')}_m)$. It follows that the Birkhoff average of $L(y)$ with respect to $\varphi$ converges to both $\mu_{q_k}([\xi])$ and $\mu_{q_m}([\xi])$, so these numbers must be equal. Thus, $\eps_k=f(q_k)=f(q_m)=\eps_m$, which is a contradiction.\qedhere 
\end{proof}

Therefore, in order to produce an uncountable collection of sets $N_{\QQ(\EE)}$, we need an uncountable collection of pairwise disjoint sequences $\EE$ converging to 0. One way of doing that is as follows. 

Let $A\subset(0,\infty)$ be an uncountable set of points which are pairwise rationally independent, i.e. for every $x,y\in A$ if $k_1x+k_2y=0$ with $k_1,k_2\in\Z$, then $k_1=k_2=0$. For every $a\in A$, consider the sequence
\begin{equation}
    \EE(a):=\left\{\frac{a}{m}:m\geq m_0\right\}=\{\eps_k(a):k\in\N\},
\end{equation}
starting from $m_0$ such that all the terms of the sequence are in $(0,\max f]$. This sequence gives us one of the sets we constructed $N_{\QQ(\EE(a))}$.

We claim that for $a\neq b\in A$ we have $\EE(a)\cap\EE(b)=\emptyset$. Indeed, if they have intersection, then there are integer numbers $m,m'$ such that
$$\frac{a}{m}=\frac{b}{m'},$$

therefore $m'a-mb=0$ which contradicts rationally independence of $a$ and $b$. Thus, the collection
$$\NN:=\{N_{\QQ(\EE(a))}:a\in A\}$$
is an uncountable collection of subsets of $X$ satisfying properties (1)--(4). This completes the proof of Theorem \ref{mainthm}.\hfill\qed

\end{document}